\DeclareMathOperator{\id}{id}
\DeclareMathOperator{\im}{im}
\DeclareMathOperator{\curl}{curl}
\DeclareMathOperator{\rank}{rank}
\newcommand{\K}{\mathbb{K}}
\newcommand{\set}[2]{\left\{\, #1 \ \textup{{:}}\ #2 \,\right\}}
\newcommand{\R}{\mathbb{R}}
\newcommand{\C}{\mathbb{C}}
\newcommand{\Q}{\mathbb{Q}}
\newcommand{\sym}{\mathrm{sym}}
\newcommand{\embed}{\hookrightarrow}
\newcommand{\sbullet}{\begin{picture}(1,1)(-0.5,-2)\circle*{2}\end{picture}}
\newcommand{\frarg}{\,\sbullet\,}
\DeclareMathOperator{\Curl}{Curl}
\newcommand{\cB}{\mathcal{B}}
\newtheoremstyle{thmlemcorr}{10pt}{10pt}{\itshape}{}{\itshape\bfseries}{.}{3pt}{{\thmname{#1}\thmnumber{ #2}\thmnote{ (#3)}}}
\newtheoremstyle{thmlemcorr*}{15pt}{15pt}{\itshape}{15pt}{\bfseries}{.}\newline{{\thmname{#1}\thmnumber{ #2}\thmnote{ (#3)}}}
\newtheoremstyle{defi}{15pt}{15pt}{}{15pt}{\itshape\bfseries}{.}{5pt}{{\thmname{#1}\thmnumber{ #2}\thmnote{ (#3)}}}
\newtheoremstyle{remexample}{7pt}{7pt}{}{}{\itshape}{.}{5pt}{{\thmname{#1}\thmnumber{ #2}\thmnote{ (#3)}}}
\newtheoremstyle{ass}{10pt}{10pt}{}{}{\bfseries}{.}{10pt}{{\thmname{#1}\thmnumber{ }\thmnote{ (#3)}}}
\theoremstyle{thmlemcorr}
\newtheorem{theorem}{Theorem}[section]
\newtheorem{lemma}[theorem]{Lemma}
\newtheorem{corollary}[theorem]{Corollary}
\newtheorem{proposition}[theorem]{Proposition}
\theoremstyle{thmlemcorr*}
\newtheorem{theorem*}{Theorem}
\newtheorem{lemma*}[theorem]{Lemma}
\newtheorem{corollary*}[theorem]{Corollary}
\newtheorem{proposition*}[theorem]{Proposition}
\newtheorem{problem*}[theorem]{Problem}
\newtheorem{conjecture*}[theorem]{Conjecture}
\theoremstyle{defi}
\newtheorem{definition}[theorem]{Definition}
\theoremstyle{remexample}
\newtheorem{remark}[theorem]{Remark}
\newtheorem{example}[theorem]{Example}
\theoremstyle{ass}
\newcommand{\cA}{\mathcal{A}}
\newcommand{\cQ}{\mathcal{Q}}
\title[Homological properties of constant rank operators]{An elementary approach to the homological properties of constant-rank operators}
\author{Adolfo Arroyo-Rabasa}
\address{Universit\'e catholique de Louvain, 
Institut de Recherche en Mathématique et Physique,
Chemin du Cyclotron 2,
1348 Louvain-la-Neuve, Belgium}
\email{adolforabasa@gmail.com,adolfo.arroyo@uclouvain.be}
\author{Jos\'e Simental}
\address{Max-Planck Institute for Mathematics. Vivatsgasse 7. Bonn, Germany, 53111}
\email{jose@mpim-bonn.mpg.de}
\begin{document}

\maketitle
\begin{abstract}
  We give a simple and constructive extension of Rai\c{t}\u{a}'s result that every constant-rank operator possesses an exact potential and an exact annihilator. Our construction is completely self-contained and provides an improvement on the order of the operators constructed by Rai\c{t}\u{a}, as well as the order of the explicit annihilators for elliptic operators due to Van Schaftingen. We also give an abstract  construction of an optimal annihilator for constant-rank operators, which extends the optimal construction of Van Schaftingen for elliptic operators. Lastly, we establish a generalized Poincar\'e lemma for constant-rank operators and homogeneous spaces on $\mathbb{R}^d$, and we show that the existence of potentials on spaces of periodic maps requires a strictly weaker condition than the constant-rank property.\\
    
    \noindent\textsc{MSC (2020):}  35E20,47F10 (primary); 13D02 (secondary).
		\vspace{4pt}
		
	\noindent\textsc{Keywords:} annihilator, constant rank, homology,  differential complex, generalized Poincar\'e lemma, Syzygies, resolutions.
		\vspace{5pt}
\end{abstract}

\tableofcontents

\section{Introduction}

Let $V$, $W$ be $\mathbb{R}$-vector or $\C$-vector spaces of dimensions $N, M$. 
We consider a homogeneous differential operator on  $\R^d$ 
from $V$ to $W$ with constant (real or complex) coefficients, that is,
\[
    \cA(D) = \sum_{|\alpha| = k} A_\alpha \partial^\alpha, 
\]
where the coefficients $A_\alpha$ belong to $\mathrm{Lin}(V,W)$, $\alpha = (\alpha_1,\dots,\alpha_d) \in \mathbb N_{0}^d$ is a multi-index with modulus $|\alpha| = \alpha_1 + \dots + \alpha_d = k$, and $\partial^\alpha$ is the composition of  partial distributional derivatives $\partial_1^{\alpha_1} \cdots \partial_d^{\alpha_d}$. As our main and only assumption, we require that $\cA(D)$ satisfies the constant-rank property: there exists a non-negative integer $r$ such that
\begin{equation}\label{eq:rank}
    \rank A(\xi) = r \quad \text{for all $\xi \in \R^d - \{0\}$,}
\end{equation}
where 
\[
     A(\xi) \coloneqq \sum_{|\alpha| = k} A_\alpha\xi^{\alpha}, \qquad \xi^\alpha \coloneqq \xi_1^{\alpha_1} \cdots \xi_d^{\alpha_d}, \qquad \xi \in \R^d,
\]
is the principal symbol associated to the operator $\cA(D)$. The symbol $A(\xi)$ is precisely the  coefficient representation of $\cA(D)$ in Fourier space, that is,
\[
    (\cA f)\;\widehat{}\;(\xi) = 
(2\pi i)^k A(\xi) \widehat f(\xi)
\]
for all Schwartz maps $f \in \mathcal S(\R^d;V)$. Our setting thus covers all linear homogeneous systems of real and complex constant coefficients acting on maps over $\R^d$, although many of our results will be stated in more generality for symbols over   $\mathbb K^d$, where $\K$ is any field.

In order to motivate this framework, let us briefly discuss its origins as well as some elements of its more recent theory. Operators of constant rank were considered by Schulenberger \& Wilcox~\cite{SW1} to prove Hilbert-space coercive inequalities
\[
    \|Du\|_{L^2} \le C \left(\|\cA u\|_{L^2} + \|u\|_{L^2}\right)\,,
\]
for non-elliptic first-order operators in full space (see also~\cite{kato,sarason,wilcox}). 
In~\cite{Mur81}, Murat built upon these ideas to  establish that~\eqref{eq:rank} is a \emph{sufficient} condition for the $L^p$-boundedness of the (extension of the)  canonical $L^2$-projection $P : C^\infty_c(\R^d) \to C^\infty_c(\R^d)$ onto $\ker \cA(D)$, which he also showed satisfied 
\[
\|u - Pu\|_{L^p} \le C(p,A) \|u\|_{L^p}
\]
for all $1 < p < \infty$. Murat's work nourished the development of the \emph{compensated compactness theory} for Sobolev spaces associated with anisotropic operators (see~\cite{Mur81} and references therein). These inequalities would later be improved by Fonseca and M\"uller~\cite{FM99} (see also~\cite{ADR18} where the trivial extension to higher order operators is established), who demonstrated that Murat's $L^p$-projection for constant-rank operators gives rise to a Korn-type estimate
\begin{equation}\label{eq:FMP}
    \|D^k(u - Pu)\|_{L^p} \le  C(p,A)\|\mathcal A u\|_{L^p}\,.
\end{equation}
Recently, Guerra and Raiț{\u{a}}~\cite{GR} showed that the constant-rank property is also a \emph{necessary} condition for the validity of~\eqref{eq:FMP}. 

Lastly, and crucial to the motivation for the content of this note, Raiț{\u{a}} proved in~\cite[Theorem~1]{raita2018potentials} that the constant-rank property~\eqref{eq:rank} is also a sufficient and necessary condition for the existence of potentials associated with real-coefficients constant-rank operators. More precisely, he proved that a real-coefficient operator $\mathcal A(D)$ has constant rank if and only if there exists a homogeneous polynomial $B: \R^d \to \mathrm{Lin}(V,V)$ such that
\begin{equation}\label{eq:exact}
\im B(\xi) = \ker A(\xi) \quad \text{for all non-zero $\xi \in \R^d$.}
\end{equation}

He exploited this purely algebraic \emph{homological} property to show that, when restricted to sufficiently regular mean-value zero $\mathbb Z^d$-periodic maps $v : \R^d / \mathbb Z^d \to V$, the constant rank assumption implies (but is not equivalent) with the following homological property: 
\[
    \mathcal Av = 0 \quad \Longrightarrow \quad  v  = \mathcal Bu \quad \text{for some $u : \R^d / \mathbb Z^d \to W$.}
\]
This homology-type result has proved to be a very useful tool to solve some longstanding questions in the calculus of variations related to the study of oscillations and concentration effects associated to sequences of PDE-constrained maps (see~\cite{adolfo,gr1,gr2,kristensen}).

\subsection{Summary of the main results} The first goal of this note is to give an alternative, rather elementary and self-contained generalization  of~\eqref{eq:exact} for symbol maps over arbitrary fields $\K$, which avoids the a computation via the Moore--Penrose pseudo-inverse of the principal symbol map $\xi \mapsto A(\xi)$. Our construction (see Lemma~\ref{thm:Q} and Theorem~\ref{thm:poincare}) is a potential $B: \K^d \to \mathrm{Lin}(U,V)$ of order $rk$, which conveys a substantial improvement on the degree of the potential $\cB(D)$ associated to $\cA(D)$ obtained by Raiț{\u{a}} for symbols over $\R^d$ (which has order $2rk$). It should be remarked, however, that in some cases our potential $\cB(D)$ may convey more  equations ($\dim U > \dim V$). In practice, one can still argue this is a sensitive gain given that $B(\xi)$ acts linearly on the $U$-variable, while the order $k'$ of $\cB(D)$ considerably increases the non-linearity of the $\xi$-variable of the symbol $B(\xi)$. Despite our improvement on the order of $\cB(D)$, our explicit construction may not attain the minimal possible order. Therefore, in Proposition~\ref{prop:construction} we give another (abstract) construction of an \emph{optimal potential}  operator $\cB(D)$, which extends the optimal construction  for elliptic operators by Van Schaftingen~\cite{schaft}. Finally, since most of our constructions are valid for symbols over $\K^d$, we also establish in Theorem~\ref{thm:C} the existence of an exact homology for symbols over $\R^d$ with constant complex-rank.

Further, we discuss the homological properties of differential complexes associated with constant-rank operators for spaces of functions defined in the full space $\R^d$. In particular, we prove that  a generalized Poincar\'e lemma holds for a class of  \emph{zero mean-value} Schwartz functions $v \in \mathcal S(\R^d;V)$. This and a simple duality argument, allows us to give a direct extension of the Poincar\'e lemma for constant-rank operators on several spaces of \emph{homogeneous distributions} (see Theorems~\ref{thm:poincare} and~\ref{cor:PL}). Thus, extending the the Poincar\'e Lemma's established in~\cite[Theorem 3.5]{G} and in~\cite[Proposition 3.16]{gr1}. As a byproduct of this result, we show (see Corollary~\ref{cor:sob}) that if $m \in \mathbb Z$, $p \in (1,\infty)$ and   $v$ is a (class) distribution  in the homogeneous Sobolev space $\dot W^{m,p}(\R^d;V)$ satisfying
\[
    \cA v = 0 \quad \text{in the sense of distributions on $\R^d$,}
\]
then there exists a constant $C = C(m,p,A)$ and a map $u \in \dot W^{m+k,p}(\R^d;U)$  such that 
\[
\cB u = v \quad \text{and} \quad \|u\|_{\dot W^{m+k',p}} \le C \|v\|_{\dot W^{m,k}}\,,
\]
where $k'$ is the order of $\cB(D)$.

Lastly, we make the observation (see Lemma~\ref{thm:last}) that the existence of potentials, when restricted to spaces of periodic maps $C^\infty(\mathbb T^d;V)$ in dimensions $d \ge 2$, is equivalent to a \emph{strictly weaker} property than~\eqref{eq:exact}. Exploiting that our symbolic construction works for arbitrary fields, we prove in Theorem~\ref{thm:periodic} that the \emph{integer constant-rank} property
\[
    \rank A(m) = r \qquad \text{for all $m \in \mathbb Z^d = \{0\}$,}
\]
is a \emph{sufficient} condition for the existence of a potential $\mathcal B(D)$ when restricted to function spaces of smooth periodic maps with zero mean-value.

\section{Homological properties of polynomial symbols}

The homological properties of differential operators that we study in this paper are defined purely through their (principal) symbol, which is a homogeneous map depending polynomially on $\xi \in \mathbb{R}^{d}$. For this reason, we first focus on such maps. Application of this to the theory of differential operators will be discussed in Section~\ref{sec:PDE}. In this first section, we allow a little more flexibility and, in particular, we will consider fields other than that of real numbers (cf. Remark \ref{rmk:other fields}).\\

\noindent\textbf{Notation.} Let $\K$ be a field of characteristic $\neq 2$, and let $V, W$ be finite-dimensional $\K$-vector spaces, of dimensions $\dim_{\K}(V) = N$ and $\dim_{\K}(W) = M$. A symbol 
\[
A(\xi) : V \to W, \qquad \xi \in \K^d,
\]
is a $\mathrm{Lin}(V,W)$-valued polynomial on $\xi$ so that each $A(\xi)$ is a linear map from $V$ to $W$.  Let us choose bases $e_{1}, \dots, e_{N}$ of $V$ and $f_{1}, \dots, f_{M}$ of $W$, respectively, so we can think of  $A(\xi)$ as the $M \times N$ matrix
\[
    A(\xi) = a^i_j(\xi), \qquad \xi \in \K^d,
\]
where the coordinates
\[
    a^i_j \in \K[\xi_1,\dots,\xi_d], \qquad i = 1,\dots,M, \; j = 1,\dots,N,
\]
are homogeneous polynomials of the same order. We will denote the columns of the matrix $A(\xi)$ by $a_{1}(\xi), \dots, a_{N}(\xi)$. Given that a symbol takes values in a space of matrices with $\K$-coefficients, the integer-valued quantity
\[
    \rank_\K A(\xi) = \dim_\K A(\xi)[V]
\]
is well-defined for all $\xi \in \K^d$. 
We say that a symbol $A(\xi)$ has constant $\rank$ if there exists a non-negative integer $r$ such that
\begin{equation}
        \rank_\K A(\xi)  = r \quad \text{for all $\xi \in \K^d - \{0\}$}.
\end{equation}
We shall often simply write $\rank_\K A = r$.

Exterior products and exterior powers will appear throughout the article. We recall that, if $\K$ is a field of characteristic $\neq 2$, and $W$ is an $M$-dimensional vector space then, for $r \leq M$ the $r$-fold exterior product
\[
\bigwedge\!^{r}\,W
\]
is the subspace of the $r$-fold tensor product $W^{\otimes r}$ that is spanned by elements of the form
\[
m_{1}\wedge \cdots \wedge m_{r} := \sum_{\sigma \in S_{r}}\operatorname{sign}(\sigma)m_{\sigma(1)}\otimes \cdots \otimes m_{\sigma(r)}
\]
where $\sigma \in S_r$ is a permutation of the set $\{1, \dots, r\}$ with sign $\operatorname{sign}(\sigma) \in \{\pm 1\}$.\footnote{If the characteristic of $\K$ is $2$, then there are two distinct possible ways to define the exterior powers. To not go into such matters, we avoid this.} It is known that $\bigwedge^r W$ is a vector space of dimension $\binom{M}{r}$, with a basis given by
\[
w_{i_{1}}\wedge \cdots \wedge w_{i_{r}}
\]
where $i_{1} < \cdots < i_{r}$ and where $w_1, \dots, w_{M}$ is itself a basis of $W$. It is a classical linear-algebraic fact that $m_1 \wedge \cdots \wedge m_r = 0$ if and only if the set $\{m_1, \dots, m_r\}$ is linearly dependent or, equivalently, if $\dim_{\K}\operatorname{span}\{m_1, \dots, m_r\} < r$. We will use this fact repeatedly throughout this work.

\subsection{Construction of an annihilator}

We maintain the notation of the section above. In particular, $A(\xi): V \to W$ is a homogeneous, degree $k$ symbol with constant rank $\rank_{\K} A = r$. We introduce the vector space
\begin{equation}\label{eqn:F}
X:= \left(\bigwedge\!^{r+1}\, W\right)^{\binom{N}{r}}
\end{equation}
which has dimension $\binom{M}{r+1}\binom{N}{r}$.\footnote{For consistency, we convene that $\binom{M}{M + 1} = 0$. This applies when $r =\dim W$, in which case $X = \{0\}$.} 
We are now in position to introduce our main explicit construction, which is a symbol $Q(\xi): W \to X$ depending on $\xi_1, \dots, \xi_d \in \K$ in a polynomial manner as follows:
\begin{equation}\label{eqn:Q}
 Q(\xi)(w) \coloneqq (a_{i_1}(\xi)\wedge \cdots \wedge a_{i_r}(\xi) \wedge w)_{1 \leq i_{1} < \cdots < i_{r} \leq N}\,.
\end{equation}

The following result shows that $Q(\xi)$ is an exact algebraic annihilator of the symbol $A(\xi)$:
\begin{lemma}\label{thm:Q} Let $A(\xi) : V \to W$ be a homogeneous symbol of degree $k$ on $\K^d$ with constant rank
\[
    \rank_\K A(\xi) = r \qquad \text{for all $\xi \in \K^d - \{0\}$.}
\]
Then, $Q(\xi) : W \to X$ is a homogeneous symbol on $\K^d$ satisfying the following properties:
\begin{enumerate} \setlength\itemsep{3pt}
    \item  If $r  < \dim W$, then the order of $Q(\xi)$ is $r k$.
    \item If $r = M$, then $X = \{0\}$ and $Q(\xi)$ is the zero operator.
    \item In either case,
        \[
        \im A(\xi) = \ker Q(\xi) \quad \text{for all non-zero} \; \xi \in  \K^d,
    \]
  
\end{enumerate}
\end{lemma}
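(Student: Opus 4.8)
The plan is to verify the three items more or less directly from the definition \eqref{eqn:Q} of $Q(\xi)$, with the constant-rank hypothesis entering only in the proof of $\ker Q(\xi) \subseteq \im A(\xi)$. First, item (2) is immediate: when $r = M = \dim W$, the convention $\binom{M}{M+1} = 0$ forces $X = \{0\}$, so $Q(\xi)$ is the zero operator by fiat. For item (1), when $r < \dim W$, each column $a_j(\xi)$ is a vector of homogeneous polynomials of degree $k$, so the wedge $a_{i_1}(\xi) \wedge \cdots \wedge a_{i_r}(\xi) \wedge w$ is, in each fixed coordinate of $\bigwedge^{r+1} W$, a homogeneous polynomial of degree $rk$ in $\xi$ (linear in $w$), whence $Q(\xi)$ is a homogeneous symbol of order $rk$; one should note in passing that $Q$ is not identically zero precisely because $r < M$ leaves room for a nonzero $(r+1)$-fold wedge.

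The substance is item (3). The inclusion $\im A(\xi) \subseteq \ker Q(\xi)$ is the easy direction: if $w = A(\xi)v$ for some $v \in V$, write $v = \sum_j c_j e_j$ so that $w = \sum_j c_j a_j(\xi)$; then for each multi-index $i_1 < \cdots < i_r$ the wedge $a_{i_1}(\xi) \wedge \cdots \wedge a_{i_r}(\xi) \wedge w$ expands $\K$-linearly into terms each containing a repeated column $a_{i_\ell}(\xi)$, hence vanishes by antisymmetry of the wedge. So $Q(\xi)w = 0$. For the reverse inclusion, fix $\xi \neq 0$ and suppose $Q(\xi)w = 0$. Since $\rank_\K A(\xi) = r$, the column space $\im A(\xi) = \operatorname{span}\{a_1(\xi),\dots,a_N(\xi)\}$ has dimension exactly $r$, so we may pick indices $j_1 < \cdots < j_r$ with $a_{j_1}(\xi),\dots,a_{j_r}(\xi)$ a basis of $\im A(\xi)$. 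The hypothesis $Q(\xi)w = 0$ gives in particular $a_{j_1}(\xi) \wedge \cdots \wedge a_{j_r}(\xi) \wedge w = 0$; by the classical fact recalled in the Notation paragraph, this says $\{a_{j_1}(\xi),\dots,a_{j_r}(\xi),w\}$ is linearly dependent, i.e. has span of dimension $\le r$. Since the $r$ columns are already independent, $w$ must lie in their span, which is $\im A(\xi)$. This proves $\ker Q(\xi) \subseteq \im A(\xi)$ and hence equality.

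I expect the main (though mild) obstacle to be purely organizational: making sure the constant-rank hypothesis is invoked correctly — it is needed only to guarantee the existence of a size-$r$ subset of the columns that spans the full column space, which is exactly what lets a single coordinate of $Q(\xi)$ detect membership in $\im A(\xi)$. One should also be slightly careful that the argument is uniform over all $\xi \neq 0$ in the sense required (the choice of spanning columns $j_1,\dots,j_r$ may depend on $\xi$, but that is harmless since the conclusion $\ker Q(\xi) = \im A(\xi)$ is pointwise in $\xi$). No deeper algebraic input — no pseudo-inverse, no Fourier analysis — is required; this is the promised elementary replacement for Rai\c{t}\u{a}'s construction.
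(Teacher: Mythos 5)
Your overall strategy coincides with the paper's: verify (1) and (2) directly from the definition of $Q(\xi)$ and $X$, and prove (3) by the two set inclusions, using the criterion that a wedge of vectors vanishes if and only if they are linearly dependent. Your argument for $\ker Q(\xi) \subseteq \im A(\xi)$ is correct and matches the paper's.

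There is, however, a genuine gap in your argument for $\im A(\xi) \subseteq \ker Q(\xi)$, and your opening claim that the constant-rank hypothesis ``enter[s] only in the proof of $\ker Q(\xi) \subseteq \im A(\xi)$'' is false. After writing $w = \sum_j c_j a_j(\xi)$, you expand $a_{i_1}(\xi)\wedge\cdots\wedge a_{i_r}(\xi)\wedge w$ into terms $c_j\, a_{i_1}(\xi)\wedge\cdots\wedge a_{i_r}(\xi)\wedge a_j(\xi)$ and assert that each such term contains a repeated column, hence vanishes by antisymmetry. That is only true when $j \in \{i_1,\dots,i_r\}$. When $j \notin \{i_1,\dots,i_r\}$ --- the generic case once $N > r$ --- the term is a wedge of $r+1$ \emph{distinct} columns, and antisymmetry alone gives nothing. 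What makes such a term vanish is instead the bound $\rank_\K A(\xi) \le r$: any $r+1$ columns of $A(\xi)$ span a space of dimension $\le r$, so they are linearly dependent and their wedge is zero. The paper's proof explicitly separates the two cases (``either there is a repeated element $\ldots$ or this set is linearly dependent''). In summary, the constant-rank hypothesis --- more precisely, the inequality $\rank_\K A(\xi)\le r$ --- is needed in \emph{both} inclusions; as written, your argument for the forward inclusion is complete only in the special case $N = r$ where $A(\xi)$ is injective.
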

\begin{proof}
Properties (1) and (2) are immediate from the definition,  
so we only need to show Property (3) for the non-trivial case when $r < \dim W$. We separate this into proving two set inclusions. Let $\xi \in \K^d$ be non-zero.

First, we prove that if $w \in \im A(\xi)$, then $Q(\xi)w = 0$. By linearity, it suffices to show this for $w = a_i(\xi)$, $i = 1, \dots, M$. Let $I = \{i_1, \cdots, i_r\}$ be a strictly ordered subset of $\{1,\dots,N\}$. Since $\rank A(\xi)  \leq r$, either there is a repeated element in  $\{a_{i_1}(\xi),\dots,a_{i_r}(\xi),a_i(\xi)\}$ or this set is  linearly dependent. We get
\[
    a_{i_{1}}(\xi) \wedge \cdots \wedge a_{i_{r}}(\xi) \wedge a_{i}(\xi) = 0\; \forall I \quad \Longrightarrow \quad Q(\xi)a_{i}(\xi) = 0.
\]
This proves that $\im A(\xi) \subset \ker Q(\xi)$ for all $\xi \in \K^d - \{0\}$.

Now take $w \in \ker Q(\xi)$. Since  $\rank A(\xi)  \geq r$, there exists a subset $I = \{i_1 < \cdots < i_{r}\} \subseteq [1,N]$ such that the set $\{a_{i_1}(\xi), \dots, a_{i_{r}}(\xi)\}$ is linearly independent. But by our assumption on $w$, $a_{i_{1}}(\xi) \wedge \cdots \wedge a_{i_{r}}(\xi) \wedge w = 0$, so $\{a_{i_1}(\xi), \dots, a_{i_{r}}(\xi), w\}$ is linearly dependent. This means that $w$ belongs to the span of $a_{i_{1}}(\xi), \dots, a_{i_{r}}(\xi)$, and thus to the image of $A(\xi)$. This finishes the proof.
\end{proof}

\subsection{The rank over the ring of polynomials} Following ~\cite{sturmfels3}, we may consider a symbol $A(\xi)$ as a map acting on vector-valued polynomials, that is, we consider its  lifting
\[
\xymatrix@1{{\mathbb K[\xi]^V}\ar[r]^-{A(\xi)} \; & \;
	{\mathbb K[\xi]^W}}\,.
\]
 If we do not assume that $\rank_\K A$ is constant, we can still perform the same construction but now taking the rank 
\[
\rank_{\K[\xi]}A(\xi)
\]
which is \emph{always} well-defined. Note that, for every specialization of the variables $\xi \in \K^d$ we have
\[
\rank_\K A(\xi) \leq \rank_{\K[\xi]}A(\xi)
\]
and we obtain equality outside of an algebraic set $\mathcal{V} \subseteq \K^n$, given by the vanishing of the $r \times r$-minors of the matrix $A(\xi)$. If $\xi \in \K^n - \mathcal{V}$, then it still holds that $\im A(\xi) = \ker Q(\xi)$. If, on the other hand, $\xi \in \mathcal{V}$, then $Q(\xi) = 0$ (cf.  \cite[Theorem 1.3]{harkonen}).

\subsection{Homological properties of symbols} The following result says that, if we specialize to the case $\K = \R,\C$, then the existence of an annihilator $Q(\xi)$ characterizes all  homogeneous symbols of constant rank $A(\xi)$. See Remark \ref{rmk:semicontinuity of rank} below for a discussion on dependence of the result on the choice of fields $\R$, $\C$.

\begin{theorem}\label{thm:algebra} Let $\mathbb K = \R,\C$, and let $A(\xi): V \to W$ be a homogeneous symbol on $\K^d$. The following are equivalent:  

\begin{enumerate}\setlength\itemsep{3pt}
    \item $\rank_\K A = r$. %$\rank A(\xi)$ is constant on $\mathbb K^d - \{0\}$.% = r$ for all $\xi \in \R^d -\{0\},$
    \item there exists a symbol complex %{\color{red}shall we say who $V,W$ are? Also below $V,W$ take the role of $E,F$}
\[
\xymatrix@1{{U}\ar[r]^-{B(\xi)} \; & \;
	{V}\ar[r]^-{A(\xi)} \; & \;  {W}\ar[r]^{Q(\xi)} \; & \;  {X}},
\]
where both $B(\xi)$ and $Q(\xi)$ are homogeneous symbols  and %of order $rk$
%\[
%    \rank B(\xi) = \dim(V) -r, \quad \rank Q(\xi) = \dim(W) - r, 
%\]
% satisfy  
\[
    \im B(\xi) = \ker A(\xi) \quad \text{and} \quad \im A(\xi) = \ker Q(\xi)
\]
for all $\xi \in \mathbb K^d - \{0\}$. 
\end{enumerate}

Moreover, if (1) is satisfied then in (2) we can always take $B(\xi)$ and $Q(\xi)$ homogeneous of order $rk$.
\end{theorem}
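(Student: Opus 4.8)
The plan is to prove the two implications separately. The implication $(2) \Rightarrow (1)$ is the easier direction: given the symbol complex with $\im B(\xi) = \ker A(\xi)$ and $\im A(\xi) = \ker Q(\xi)$ for all nonzero $\xi$, I want to show $\rank_\K A(\xi)$ is constant on $\K^d - \{0\}$. The key observation is a semicontinuity argument: the function $\xi \mapsto \rank_\K A(\xi)$ is lower semicontinuous (the locus where $r\times r$ minors are nonzero is Zariski-open, hence open in the Euclidean topology over $\R$ or $\C$), and likewise $\xi \mapsto \rank_\K Q(\xi)$ and $\xi \mapsto \rank_\K B(\xi)$ are lower semicontinuous. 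Now exactness at $V$ gives $\rank B(\xi) + \rank A(\xi) = \dim V = N$ for every nonzero $\xi$ (rank-nullity, since $\im B(\xi) = \ker A(\xi)$). Hence $\rank A(\xi)$ is simultaneously lower semicontinuous and (being $N$ minus a lower semicontinuous function) upper semicontinuous on the connected set $\K^d - \{0\}$; therefore it is locally constant, hence constant. This yields $(1)$. (I would remark that connectedness of $\K^d - \{0\}$ is where we use $\K = \R$ with $d \ge 2$, or $\K = \C$; the case $d = 1$ is trivial since the symbol is a monomial times a fixed matrix. This is presumably the content flagged for Remark \ref{rmk:semicontinuity of rank}.)

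For the implication $(1) \Rightarrow (2)$, and the "moreover" clause, the construction of $Q(\xi)$ is already supplied: Lemma \ref{thm:Q} gives a homogeneous symbol $Q(\xi) : W \to X$ of order $rk$ with $\im A(\xi) = \ker Q(\xi)$ for all nonzero $\xi$, under exactly the constant-rank hypothesis. So the entire remaining task is to produce the potential $B(\xi) : U \to V$, homogeneous of order $rk$, with $\im B(\xi) = \ker A(\xi)$ for all nonzero $\xi$. The natural idea is to apply the construction of Lemma \ref{thm:Q} a second time, to a symbol whose kernel is $\ker A(\xi)$ — but $Q(\xi)$ already has image equal to... no: what we need is a symbol $\widetilde A$ with $\im \widetilde A(\xi) = \ker A(\xi)$. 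The clean route is duality: consider the transpose symbol $A(\xi)^{\mathsf T} : W^* \to V^*$. Since $\rank_\K A(\xi)^{\mathsf T} = \rank_\K A(\xi) = r$ for all nonzero $\xi$, the symbol $A(\xi)^{\mathsf T}$ also has constant rank $r$, so Lemma \ref{thm:Q} applied to $A(\xi)^{\mathsf T}$ produces a homogeneous symbol $P(\xi) : V^* \to Y$ of order $rk$ with $\im A(\xi)^{\mathsf T} = \ker P(\xi)$. Dualizing back, set $B(\xi) := P(\xi)^{\mathsf T} : Y^* \to V$ (so $U := Y^*$). Then $\im B(\xi) = \im P(\xi)^{\mathsf T} = (\ker P(\xi))^{\perp} = (\im A(\xi)^{\mathsf T})^{\perp} = \ker A(\xi)$, using the standard annihilator identities $(\im T^{\mathsf T})^\perp = \ker T$ valid for linear maps of finite-dimensional spaces over any field. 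This $B(\xi)$ is homogeneous of order $rk$ by construction, finishing $(2)$ and the order bound.

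The only point requiring care — and the place where I expect a referee to look hardest — is the compatibility of these two constructions, i.e. that the composite $A(\xi)\circ B(\xi)$ is genuinely zero as a polynomial map (not just rank-deficient), so that we really have a \emph{complex} of symbols and not merely a pointwise-exact sequence. But this is automatic: $\im B(\xi) = \ker A(\xi)$ holds for all $\xi$ in the Zariski-dense set $\K^d - \{0\}$, so the polynomial map $\xi \mapsto A(\xi)B(\xi)$ vanishes on a dense set and hence identically; similarly $Q(\xi)A(\xi) \equiv 0$. The other routine checks — that $P(\xi)^{\mathsf T}$ is polynomial in $\xi$ (transposition of a polynomial matrix is polynomial), that homogeneity degrees are preserved under transposition, and that the annihilator identities hold over an arbitrary field of characteristic $\ne 2$ (indeed any field) — are standard linear algebra. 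I would also note that the hypothesis $\K = \R, \C$ is used \emph{only} in the direction $(2)\Rightarrow(1)$, via connectedness and the Euclidean topology; the construction in $(1)\Rightarrow(2)$ works verbatim over any field, which is the content exploited later for the periodic case.
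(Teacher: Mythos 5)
Your proposal is correct and follows essentially the same route as the paper: apply Lemma~\ref{thm:Q} to get $Q(\xi)$, then apply it again to the dual/transpose symbol $A(\xi)^{*}$ and dualize back to get $B(\xi)$ of order $rk$, and use rank-nullity plus semicontinuity of rank on the connected set $\K^d-\{0\}$ for the converse. The only (cosmetic) deviation is your handling of the $d=1$, $\K=\R$ case in $(2)\Rightarrow(1)$: you observe the claim is trivial because a one-variable homogeneous symbol is a monomial times a fixed matrix, whereas the paper instead invokes $A(-\xi)=(-1)^k A(\xi)$ to transport constancy across the two components of $\R-\{0\}$; both are one-line arguments.
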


\begin{remark}\label{rmk:semicontinuity of rank}
 Even though for the applications we consider here it is enough to consider $\K = \R$ or $\C$, the reader should be aware (cf. Lemma~\ref{thm:Q}) that the proof of (1) $\Rightarrow$ (2) in Theorem~\ref{thm:algebra} is unchanged if, instead, we take $\K$ to be \emph{any} field of characteristic different from $2$. Note that polynomial differential operators make sense over any field: we have the algebra of polynomial differential operators
$$
D(\K^n) := \K\langle x_1, \dots, x_n, \partial_1, \dots, \partial_n\rangle / ([x_{i}, x_{j}] = 0, [\partial_{i}, \partial_{j}] = 0, [\partial_{i}, x_{j}] = \delta_{ij}).
$$
The symbol map is defined using the so-called \emph{Bernstein filtration} on this algebra, and $D(\K^n)$ contains the subalgebra of constant-coefficient differential operators, which is generated by $\partial_1, \dots, \partial_n$ and is known to be a polynomial algebra in $n$ variables. See, e.g., \cite{borel, HTT}. \end{remark}

\begin{proof}
That (2) implies (1) follows directly from the rank-nullity theorem and the lower semicontinuity of the rank as follows: Firstly, (2) implies that $\rank_\K A(\xi)$ is an integer-valued continuous function of $\xi \in \K^{d} - \{0\}$. When $d > 1$ or $\K = \C$, $\K^{d} - \{0\}$ is connected, so $\rank_{\K} A(\xi)$ is constant. When $d = 1$ and $\K = \R$, the set $\R -\{0\}$ is not connected. However, we can still conclude that $\rank_\R A(\xi)$ is constant on $\R_{>0}$ and on $\R_{<0}$. Now note that, by homogeneity, $A(-\xi) = (-1)^{k}A(\xi)$. Thus $\rank_{\R}A(\xi)$ is a fortiori constant on $\R-\{0\}$.

To see that (1) implies (2), we shall appeal to the construction of the previous theorem so that $Q(\xi)$ is precisely the operator constructed there, i.e., $X$ as in eq. \eqref{eqn:F} and $Q$ as in eq. \eqref{eqn:Q}. To construct $B(\xi)$ we argue by duality, so we need to set-up some notation. For the rest of this proof, if $A: V \to W$ is an operator, then $A^{*}: W^{*} \to V^{*}$ is its dual map, where as usual $V^{*}$ is the vector space of linear functionals on $V$. Moreover, if $Y \subseteq V$ is a subspace, we denote by $Y^{\perp} := \{\varphi \in V^{*} \mid \varphi(y) = 0 \; \text{for every} \; y \in Y\}$. It is easy to see that $\im A(\xi)^* = \ker A(\xi)^\perp$, and it follows that the dual $A(\xi)^*$ is also a symbol of order $k$ and constant rank $r$. Thus, we may apply the previous theorem to $A(\xi)^*$ to find a symbol $B(\xi)^*$ of order $rk$ and rank $\dim(V) - r$ satisfying $\im A(\xi)^* = \ker B(\xi)^*$ for all $\xi \in \R^d - \{0\}$. Dualizing this identity once more and writing $B(\xi) \coloneqq B(\xi)^{**}$, we deduce that
$B(\xi)$ is of order $rk$, of constant rank $\dim(V) - r$ and satisfies $\im B(\xi) = \ker A(\xi)$. 
\end{proof}

\begin{remark}
Note that (2) $\Rightarrow$ (1) in Theorem~\ref{thm:algebra} \emph{crucially} uses that $\K = \R$ or $\C$, for otherwise we cannot use a continuity argument to conclude that the rank of $A(\xi)$ is constant (compare this with Remark~\ref{rmk:other fields} below).
\end{remark}

\subsection{Real symbols with constant rank over $\C$}Since it has been the object of both classical and recent developments  in computational commutative algebra \cite{sturmfels,harkonen} and PDE theory~\cite{ADN1,ADN2,anna,slicing,diening,gmeineder,smith1,smith2}, we also discuss the homological properties of symbols over $\R$ with constant-rank over $\C$. Recall that if $V$ is a real vector space, its complexification   is defined to be
\[
V_{\C} \coloneqq \C \otimes_{\R} V = V \oplus \mathrm i V
\]
where the last decomposition is only as real vector spaces. If $f: V \to W$ is a linear map of real vector spaces, its complexification is
\[
f_{\C} := \operatorname{id}_{\C} \otimes_{\R} f: V_{\C} \to W_{\C}.
\]
In layperson's terms, $f_{\C}(v_1 + \mathrm i v_2) = f(v_1) + \mathrm i f(v_2)$. Note that $f_{\C}$ is clearly a linear map of complex vector spaces.

\begin{definition}
Given a symbol $A(\xi) : V \to W$ on $\R^d$, we define its complexification to be the symbol $A(\xi)_\C : V_\C \to W_\C$, 
where $A(\xi)_\C$ is considered as  a polynomial of complex variables. 
\end{definition}
We have the following:

\begin{theorem}\label{thm:C}
Let $A(\xi) : V \to W$ be a homogeneous symbol on $\R^d$. The following are equivalent:
\begin{enumerate}
    \item $\rank_\C A(\xi)_\C$ is constant on $\C^d - \{0\}$. %where 
%\[
 %   \rank_\C A(\xi) \coloneqq \dim A(\xi)[V_{\C}]%V + \mathrm i V]
%\]
%is the \emph{complex-rank} of $\cA(D)$.
\item There exists   a symbol complex (on $\R^d$)
\[
\xymatrix@1{{U}\ar[r]^-{B(\xi)} \; & \;
	{V}\ar[r]^-{A(\xi)} \; & \;  {W}\ar[r]^{Q(\xi)} \; & \;  {X}}
\]
where both $B(\xi)$ and $Q(\xi)$ are homogeneous (both have real coordinate coefficients) and satisfy the exactness properties:
%\[
 %   B(\xi)[U + \mathrm iU] = \set{v \in V + \mathrm i V}{A(\xi)v = 0}
%\]
\[
    \im B(\xi)_\C = \ker  A(\xi)_\C \quad \text{and} \quad \im A(\xi)_\C = \ker Q(\xi)_\C
\]
for all $\xi \in \mathbb C^d - \{0\}$.
\end{enumerate}
\end{theorem}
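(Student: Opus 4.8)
\textbf{Proof plan for Theorem~\ref{thm:C}.}
The plan is to reduce everything to the already-established Theorem~\ref{thm:algebra}, applied over the field $\K = \C$, together with a descent argument that produces symbols with \emph{real} coefficients. For the implication (1) $\Rightarrow$ (2), I would start from the complexified symbol $A(\xi)_\C : V_\C \to W_\C$, which by hypothesis has constant rank $r$ on $\C^d - \{0\}$. Theorem~\ref{thm:algebra} then yields homogeneous symbols $B_\C(\xi)$ and $Q_\C(\xi)$ of order $rk$ over $\C^d$ forming an exact complex $U_\C \to V_\C \to W_\C \to X_\C$. The crucial point is that these can be taken with real coefficients: indeed, the explicit construction of $Q(\xi)$ in Lemma~\ref{thm:Q} is built from wedge products of the columns $a_1(\xi), \dots, a_N(\xi)$ of $A(\xi)$, which already have real polynomial entries, so $Q_\C(\xi)$ is literally the complexification of the real symbol $Q(\xi) : W \to X$ defined by \eqref{eqn:Q}. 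For $B(\xi)$, recall that Theorem~\ref{thm:algebra} constructs it by dualizing and re-applying the $Q$-construction to $A(\xi)^*$; since duals and the wedge construction preserve rationality of coefficients, $B_\C(\xi)$ is again the complexification of a real symbol $B(\xi) : U \to V$. One then checks that $\im B(\xi)_\C = \ker A(\xi)_\C$ and $\im A(\xi)_\C = \ker Q(\xi)_\C$ hold on $\C^d - \{0\}$, which is exactly the conclusion of Theorem~\ref{thm:algebra} applied over $\C$.

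For the converse (2) $\Rightarrow$ (1), the argument is essentially the same lower-semicontinuity/connectedness argument used in the proof of Theorem~\ref{thm:algebra}: the hypothesis gives a symbol complex over $\R^d$ whose complexification is exact on $\C^d - \{0\}$, so $\rank_\C A(\xi)_\C$ is lower semicontinuous, and by exactness of the complex (rank-nullity on both sides of $A(\xi)_\C$) it is also upper semicontinuous, hence locally constant; since $\C^d - \{0\}$ is connected for every $d \ge 1$, it is constant. Note that here, unlike in Theorem~\ref{thm:algebra}, there is no need to separately handle a disconnected base, because $\C^d - \{0\}$ is always connected.

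The main obstacle — and the only genuinely non-bookkeeping point — is the descent to real coefficients in the direction (1) $\Rightarrow$ (2). One must be careful that the output of Theorem~\ref{thm:algebra}, as \emph{stated}, produces a symbol over $\C$ without any a priori control on the field of definition of its coefficients; the resolution is to go back to the \emph{construction} rather than the statement: Lemma~\ref{thm:Q} produces $Q(\xi)$ with entries that are $\Z$-linear (in fact, signed-permutation) combinations of products of the entries of $A(\xi)$, hence real whenever $A(\xi)$ is real, and the duality step used for $B(\xi)$ only involves transposition and a second application of this construction, again preserving real coefficients. I would phrase this as a short remark that the construction of Lemma~\ref{thm:Q} and the duality in Theorem~\ref{thm:algebra} are ``defined over $\R$'', so that for a real symbol $A(\xi)$ the resulting $B(\xi), Q(\xi)$ are real and their complexifications are the symbols furnished by Theorem~\ref{thm:algebra} over $\C$. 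Everything else is a direct invocation of the earlier results plus the connectedness of $\C^d - \{0\}$.
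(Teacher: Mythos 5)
Your proposal is correct and follows essentially the same route as the paper: the paper's own proof is the one-line observation that complexification commutes with the constructions of $Q(\xi)$ (Lemma~\ref{thm:Q}) and $B(\xi)$ (the duality step in Theorem~\ref{thm:algebra}), so the result follows from Theorem~\ref{thm:algebra} over $\C$. You have simply spelled out the details of why those constructions have real coefficients (signed integer combinations of products of entries, plus transposition), which is exactly the content the paper compresses into ``it is easy to see.''
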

\begin{proof}
It is easy to see that the complexification of symbols  commutes with the construction of the operators $Q(\xi)$ and $B(\xi)$, from where the result follows.
\end{proof}

\begin{remark}\label{rmk:other fields}
The constant-rank property is not an invariant across distinct fields. Take, for instance the Cauchy--Riemann equations 
\[
    A(D)u = (\partial_1 u_1 - \partial_2 u_2,\partial_1 u_2 + \partial_2 u_1), \qquad u : \R^2 \to \R^2. 
\]
Its associated principal symbol is the conformal matrix field $A(\xi) = (\xi,\xi^\perp)$. 
Evidently, $A(\xi)$ is invertible for all $\xi \in \R^2$ since its determinant is precisely $|\xi|^2$. However, as a complex map it is not always invertible. Indeed, its determinant is $\xi_1^2 + \xi_2^2$, which is a polynomial with non-trivial zeroes in $\C^2$. 
%When $\operatorname{char} \K = p > 0$ the construction above does not work as stated (indeed, one has problems defining the symbol of the operator $\partial^{p}$) so one should allow for divided powers and use the so-called \emph{crystalline} differential operators to make the symbol well-defined, see e.g. \cite{borel, HTT}.
\end{remark}

\subsection{Regularity properties of the Moore-Penrose symbol} Throughout this section and unless otherwise explicitly stated, we assume $\K = \R$. The Moore--Penrose inverse of $M \in \mathrm{Lin}(V,W)$ is the unique linear map $M^\dagger : W \to V$ defined by the fundamental property:%The conclusions about the Moore--Penrose inverse map follow directly from its fundamental property
\begin{equation}\label{eq:moorepenrose}
    M^\dagger M = \mathrm{proj}_{\ker M^\perp}.
    \end{equation}
Here, the orthogonal space to the kernel  $(\ker M)^{\perp}$ is taken with respect to the usual inner product on $V \cong \R^{N}$. Given a symbol $A(\xi) : V \to W$ on $\R^d$, we may define the Moore-Penrose inverse of $A(\xi)$ as the unique map $A(\xi)^\dagger : W \to V$ satisfying
\[
    A(\xi)^\dagger \circ A(\xi) = \operatorname{proj}_{(\ker A(\xi))^\perp} \qquad \text{for every $\xi \in \R^d - \{0\}$.}
\]
Using the same ideas that motivated the construction of $Q(\xi)$ in Theorem \ref{thm:Q}, we obtain an immediate proof of the following fact, see e.g., \cite{raita2018potentials}. (See Remark \ref{rmk:MP C} below about the properties of the Moore--Penrose pseudoinverse map for symbols on $\C^d$.)

\begin{proposition}\label{prop: proj analytic}
Let $A(\xi) : V \to W$ be a homogeneous symbol on $\R^d$ satisfying the constant-rank property
\[
    \rank A(\xi) = r \qquad \text{for all $\xi \in \R^d - \{0\}$.}
\]
Then, the projection
\[
    \xi \mapsto \pi(\xi) \coloneqq \mathrm{proj}_{\ker A(\xi)^\perp}
\]
is rational and homogeneous of degree zero on $\mathbb R^d - \{0\}$. In particular, the Moore--Penrose pseudoinverse map
\[
    \xi \mapsto  A(\xi)^\dagger
\]
is rational and homogeneous of degree $-k$ on $\mathbb R^d - \{0\}$.
\end{proposition}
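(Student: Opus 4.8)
The plan is to exploit the same exterior-algebra mechanism that produced $Q(\xi)$ in Lemma~\ref{thm:Q}, this time applied to build the projection $\pi(\xi)$ explicitly from minors. First I would fix $\xi \in \R^d - \{0\}$ and recall that, by the constant-rank hypothesis, there is always a size-$r$ subset $I = \{i_1 < \cdots < i_r\}$ of columns of $A(\xi)$ that is linearly independent, and that such a set spans $\ker A(\xi)^\perp$ --- wait, more precisely spans $\im A(\xi)$; but the relevant object here is the image, and by duality (exactly as in the proof of Theorem~\ref{thm:algebra}) the orthogonal projection onto $(\ker A(\xi))^\perp$ in $V$ is obtained from $A(\xi)^*$, whose columns are the rows of $A(\xi)$. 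So I would phrase everything in terms of an orthogonal projection $P_U(\xi)$ onto a subspace $U(\xi) = \operatorname{span}\{v_1(\xi), \dots, v_r(\xi)\}$ where the $v_j$ are homogeneous-polynomial vector fields (the relevant columns/rows of $A(\xi)$, degree $k$ each), and then specialize.

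The key step is the Gram/Cramer formula for an orthogonal projection onto a span of vectors: for a linearly independent family $v_1, \dots, v_r$, the matrix of $P_U$ is $\frac{\operatorname{adj}(G)\text{-type}}{\det G}$ combination, or cleanly, using the Cauchy--Binet identity,
\[
    \langle P_U x, y\rangle \;=\; \frac{\det\!\big(G(v_1,\dots,v_r,x),\,G(v_1,\dots,v_r,y)\big)\text{-cofactor}}{\det G(v_1,\dots,v_r)}\,,
\]
which one packages most transparently via exterior powers: $P_U$ corresponds to the element $\frac{(v_1\wedge\cdots\wedge v_r)\otimes(v_1\wedge\cdots\wedge v_r)}{\|v_1\wedge\cdots\wedge v_r\|^2}$ under the canonical identification, and each component of $v_1(\xi)\wedge\cdots\wedge v_r(\xi)$ is an $r\times r$ minor, hence a homogeneous polynomial of degree $rk$ in $\xi$, while the denominator $\|v_1(\xi)\wedge\cdots\wedge v_r(\xi)\|^2$ is a homogeneous polynomial of degree $2rk$. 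Thus on the open set $\Omega_I := \{\xi : \det G(v_{i_1}(\xi),\dots,v_{i_r}(\xi)) \ne 0\}$ the projection $\pi(\xi)$ agrees with a fixed rational map homogeneous of degree $0$ (numerator and denominator both degree $2rk$). Since the choice of $I$ does not affect the value of the projection (the span $\ker A(\xi)^\perp$ is intrinsic), these local rational expressions glue to a single rational function on $\bigcup_I \Omega_I = \R^d - \{0\}$, proving $\pi$ is rational and $0$-homogeneous; multiplying by $A(\xi)^\dagger = \pi(\xi)$-postcomposed-with-the-inverse-on-the-complement, equivalently writing $A(\xi)^\dagger = A(\xi)^* \big(A(\xi)A(\xi)^*\big)^\dagger$ handled the same way, one reads off degree $-k$.

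I expect the main obstacle to be purely bookkeeping: verifying that the various local rational formulas (one per independent set $I$ of columns) genuinely coincide as rational functions, not merely pointwise on the overlaps --- this follows because two rational functions agreeing on a nonempty Zariski-open set are equal, and the overlaps $\Omega_I \cap \Omega_{I'}$ are nonempty whenever both are (any $\xi$ where the rank is $r$ lies in many $\Omega_I$), but it must be stated with care. A secondary technical point is that $\det G(v_{i_1}(\xi),\dots,v_{i_r}(\xi))$ is not globally nonzero on $\R^d-\{0\}$ for a single $I$, so one genuinely needs the covering by the $\Omega_I$; the constant-rank hypothesis is exactly what guarantees this cover is all of $\R^d - \{0\}$. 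The homogeneity degrees then drop out of counting: minors of a degree-$k$ matrix have degree $rk$, Gram determinants degree $2rk$, their ratio degree $0$ for $\pi$, and one extra factor of $A(\xi)$ (degree $k$) in the denominator for $A(\xi)^\dagger$ giving degree $-k$. No deep input beyond Lemma~\ref{thm:Q}'s linear-algebra lemma and Cramer's rule is needed.
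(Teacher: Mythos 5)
Your proof is correct and takes essentially the same route as the paper's: both exploit the constant-rank hypothesis to make a locally valid choice of $r$ independent columns of $A(\xi)^{*}$ (equivalently, rows of $A(\xi)$), express the projection in terms of the $r\times r$ minors of that submatrix, and then glue the resulting local rational expressions because the subspace $(\ker A(\xi))^{\perp}$ is intrinsic. The paper phrases the local step via the Pl\"ucker embedding $\operatorname{Gr}(r,W)\hookrightarrow\mathbb{P}(\bigwedge^{r}W)$ of the map $\xi\mapsto\im A(\xi)$, whereas you write out the explicit Gram/Cauchy--Binet formula $P_{U}=\bigl(v_{1}\wedge\cdots\wedge v_{r}\bigr)\otimes\bigl(v_{1}\wedge\cdots\wedge v_{r}\bigr)/\|v_{1}\wedge\cdots\wedge v_{r}\|^{2}$; these are the same computation, with yours being a touch more concrete and making the degree count $2rk/2rk=0$ transparent. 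The one place you differ slightly is the pseudoinverse: the paper extracts rationality of $A(\xi)^{\dagger}$ by viewing the entries as solving the linear system $A^{\dagger}A=\pi$ over $\R(\xi_{1},\dots,\xi_{d})$, while you invoke $A^{\dagger}=A^{*}(AA^{*})^{\dagger}$, which is valid but quietly shifts the rationality question onto $(AA^{*})^{\dagger}$ and so requires running the projection argument once more; the paper's linear-system route is marginally more economical, but both work and the homogeneity degree $-k$ falls out either way.
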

\begin{proof}
Upon taking adjoints, the first statement is equivalent to checking that the map $\varphi: \R^d - \{0\} \to \operatorname{Gr}(r, W) : \xi \mapsto \im A(\xi)$ is rational. Here, $\operatorname{Gr}(r, W)$ is the Grassmannian of $r$-dimensional subspaces in $W$, which is identified with an algebraic subvariety of the projective space $\mathbb{P}(\bigwedge\!^{r}W)$ by means of the Pl\"ucker embedding \cite{manivel, plucker}. The case where $A(\xi)$ is injective is clear, for the map $\varphi$ can be decomposed as $\xi \mapsto a_{1}(\xi) \wedge \cdots \wedge a_{r}(\xi)$ followed by the projection $(\bigwedge\!^{k}W - \{0\}) \to \mathbb{P}(\bigwedge\!^{k}W)$, both of which are rational. For the general case, we work locally. Let $\xi \in \R^d - \{0\}$. We know that there exist $i_{1}, \dots, i_{k}$ such that $a_{i_{1}}(\xi), \dots, a_{i_{r}}(\xi)$ are a basis for $\im A(\xi)$. But, since this is equivalent to the nonvanishing of a minor of $A(\xi)$, the same is true for every $\xi'$ in a neighborhood of $\xi$. We can then run the same argument as in the injective case, with $1, \dots, k$ replaced by $i_{1}, \dots, i_{k}$, to see that the map $\varphi$ is rational in a neighborhood of $\xi$ and is therefore rational everywhere on $\R^d - \{0\}$. Finally, since $A(\xi)$ is homogeneous we have that $\ker A(\xi) = \ker A(\lambda \xi)$ for every $\lambda \in \R$ and every $\xi \in \R^{d}$, from where homogeneity of degree $0$ follows immediately for $\pi(\xi)$. 

Let us now prove the statement on the Moore--Penrose inverse map. We have $A(\xi)^\dagger A(\xi) = \pi(\xi)$. We record that both $A(\xi)$ and $\pi(\xi)$ are matrices whose entries belong to the field of rational functions $\mathbb R(\xi_1, \dots, \xi_d)$. Expanding the product \eqref{eq:moorepenrose} keeping the entries of $M^{\dagger}$ unknown, we see that the entries of $M^{\dagger}$ solve a linear system of equations over the field $\mathbb R(\xi_1, \dots, \xi_d)$. Thus, its entries also belong to the field $\mathbb R(\xi_1, \dots, \xi_d)$. The claim about the degree of $M^{\dagger}$ is clear.
\end{proof}

\begin{remark}\label{rmk:MP C}
If, instead of taking $\K = \R$ we take $\K = \C$, then the map $\varphi: \C^{d} - \{0\} \to \mathrm{Gr}(r, W)$ is still rational, with the same proof as in that of Proposition \ref{prop: proj analytic}. However, the usual Hermitian form on $\C^N$ involves taking complex conjugates on one of its entries, so we obtain that $\xi \mapsto \pi(\xi)$ and $\xi \mapsto A(\xi)^{\dagger}$ are are still rational when considered as functions of $\xi$ and its conjugate $\overline{\xi}$. 
\end{remark}

\section{Homological properties of differential operators}\label{sec:PDE}
%\subsection{Construction of an annihilator}

\subsection{Background theory}In order to lift the homological properties of the symbol complex (algebraic framework) to its associated differential complex (functional setting), we need to introduce a suitable space of functions. Let us recall that the space of Schwartz maps 
\[
\mathcal S(\R^d) \coloneqq \left\{f \in C^\infty(\R^d) : \sup_{\alpha,\beta} \|x^\alpha \partial^\beta f(x)\|_\infty < \infty \right\}
\]
is the space of smooth maps on $\R^d$ whose derivatives of all orders decay faster than any polynomial rate at infinity. We consider its subspace
\[
     \dot{\mathcal S}(\R^d) \coloneqq \left\{f \in \mathcal S(\R^d) : (\partial^\alpha\widehat f)(0) = 0 \; \text{for every multi-index $\alpha$} \right\},
\]
where
\[
    \widehat f(\xi) = \mathcal Ff( \xi)  \coloneqq  \int_{\R^d} \mathrm{e}^{2\pi \mathrm{i} (\xi \cdot x)} f(x) \, dx,
\]
is the Fourier transform of $f$. The space $\dot{\mathcal S}(\R^d)$ inherits the same topology of $\mathcal S(\R^d)$ and with this topology it is a closed subspace. In particular, we have (see p.10 in~\cite{Triebel})
\[
    \dot{\mathcal S}(\R^d) = \set{\varphi \in \mathcal S(\R^d)}{\|\varphi\|_{k}^* < \infty, k \in \mathbb N_0},
\]
where (see~\cite[p.10]{Triebel})
\[
    \|\varphi\|_{k}^* = \sup_{\substack{\xi \in \R^d,\\ 0\le |\alpha| \le k}}  \left(|\xi|^k + |\xi|^{-k} \right) |D^\alpha \widehat \varphi (\xi)|, \qquad k \in \mathbb N_0.
\]

Note that if $f \in \dot{\mathcal S}(\R^d)$, then 
\begin{equation}\label{eq:ave}
    \int_{\R^d} p(x)  f(x) \, dx =  0,
\end{equation}
for all polynomials $p \in \R[x_1,\dots,x_d]$. As usual we write  $\mathcal S'(\R^d)$ to denote space of tempered distributions, which is the topological dual of $\mathcal S(\R^d)$. The Fourier transform is therefore extended to $\mathcal S'(\R^d)$ by duality.  
The space $\dot{\mathcal S}'(\R^d)$ of \emph{homogeneous tempered distributions} is defined as the continuous dual space of $\dot{\mathcal S}(\R^d)$. Notice that~\eqref{eq:ave} and the Hahn--Banach theorem allow one to identify $\dot{\mathcal S}'(\R^d)$ with the quotient space $\mathcal S'(\R^d)/\R[x_1,\dots,x_d]$ of tempered distributions modulo polynomials. In particular, $L^p$-spaces are subspaces of  homogeneous distributions, that is,
\[
    L^p(\R^d) \cap \dot{\mathcal S}'(\R^d) = L^p(\R^d).
\]

 It is well-known that $ \mathcal F$ defines a linear isomorphism from $\mathcal S(\R^d;\C)$ into itself, which by duality also extends to an isomorphism from $\mathcal S'(\R^d;\C)$ into itself. We write $(\frarg)^\vee$ to denote the inverse of $\mathcal F$. 
Appealing to the Taylor expansion of $\widehat f$ at $0$, it is immediate to verify that for every $\sigma \in \mathbb R$, the $\sigma$-Riesz potential convolution operator
\[
    I_\sigma f \coloneqq (|\xi|^\sigma \widehat f)^\vee, 
\]
defines an isomorphism from $\dot{\mathcal S}(\R^d;\C)$ into itself. Indeed, by the Leibniz rule it follows that if $|\sigma| \le m \in \mathbb N$, then
\begin{align*}
    \|I_\sigma f\|_k^* & \le C_{d,k,\sigma} \|f\|_{2k + m}^*, \qquad k \in \mathbb N_0
\end{align*}
Once again, by duality, the $\sigma$-Riesz potential $I_\sigma$ extends to an isomorphism from $\dot{\mathcal S}'(\R^d;\C)$ into itself. These considerations extend in a natural way to $\mathcal S(\R^d;V)$ and $\mathcal S'(\R^d;V)$, the respective spaces of $V$-valued Schwartz and tempered distribution spaces. We remind the reader that if $V$ is a $\C$-space and $f \in \mathcal S(\R^d;V)$, then $\widehat f \in \mathcal S(\R^d;V)$. If, on the other hand, $V$ is only an $\R$-vector space, then $\mathcal S(\R^d;V) \embed \mathcal S(\R^d; V_\C)$, so in this case we naturally have the Fourier transform $\widehat f \in \mathcal \mathcal S(\R^d; V_\C)$.

%if $u \in \dot{\mathcal S}(\R^d;V)$, then
%\begin{equation}\label{eq:taylor}
%    \left\{\xi \mapsto \mathcal  F^{-1}\left(\frac{\mathcal Fu(\xi)}{|\xi|^m}\right)\right\} \in \mathcal Z(\R^d;V) \quad \text{for all $m \in \mathbb N_{0}$}.
%\end{equation}
%Lastly, we recall that $\mathcal D'(\R^d;V) = (C^\infty_c(\R^d;V))^*$ denotes the space of $V$-valued distributions over $\R^d$.

\subsection{Homology for homogeneous spaces} 

The following result is a full-space analog  of~\cite[Lemma~2]{raita2018potentials}, where a similar result has been established for functions defined over the flat torus $\R^d / \mathbb Z^d$.

\begin{theorem}\label{thm:poincare} Let $\cA(D)$ be a constant coefficient $k$\textsuperscript{th} order operator on $\R^d$ from $V$ to $W$. The following are equivalent:
\begin{enumerate}[(1)]
    \item $\rank A(\xi)$ is constant on $\R^d - \{0\}$
\item there exists a complex of differential operators 
\[
\xymatrix@1{{\mathcal D'(\R^d;U)}\ar[r]^-{\cB(D)} \; & \;
	{ \mathcal D'(\R^d;V)}\ar[r]^-{\cA(D)} \; & \;  {\mathcal D'(\R^d;W)}\ar[r]^{\cQ(D)} \; & \;  {\mathcal D'(\R^d;X)}},
\]
%i.e.,
%\[\im \cB(D) \subset \ker \cA(D) \qquad \text{and} \qquad \im \cA(D) \subset \ker \cQ(D),
%\]
%where $\cB(D),\cQ(D)$ are homogeneous differential operators on $\R^d$. \\
%\mnote{Is there a way to call complexes with trivial homoglogy?}
which restricts to an exact differential complex %where it additionally has a trivial homology, that is,
\begin{equation*}%\label{eq:elliptic}
\xymatrix@1{{\dot{\mathcal{S}}(\R^d;U)}\ar[r]^-{\cB(D)} \; & \;
	{\dot{\mathcal{S}}(\R^d;V)}\ar[r]^-{\cA(D)} \; & \;  {\dot{\mathcal{S}}(\R^d;W)}\ar[r]^{\cQ(D)} \; & \;  {\dot{\mathcal S}(\R^d;X)}},
\end{equation*}
that is, 
\[
     \im \cB(D)|_{\dot{\mathcal S}} = \ker \cA(D)|_{\dot{\mathcal S}} \quad \text{and} \quad \im \cA(D)|_{\dot{\mathcal S}} = \ker \cQ(D)|_{\dot{\mathcal S}}.%\frac{\ker A(D)}{\im B(D)} = \{0\} \quad \text{and} \quad \frac{\ker Q(D)}{\im A(D)} = \{0\}.
\]
In particular, it also restricts to an exact differential complex
\[
\xymatrix@1{{\dot{\mathcal{S}}'(\R^d;U)}\ar[r]^-{\cB(D)} \; & \;
	{\dot{\mathcal{S}'}(\R^d;V)}\ar[r]^-{\cA(D)} \; & \;  {\dot{\mathcal{S}'}(\R^d;W)}\ar[r]^{\cQ(D)} \; & \;  {\dot{\mathcal S'}(\R^d;X)}},
\]
 on spaces of homogeneous tempered distributions. 
\end{enumerate}
\end{theorem}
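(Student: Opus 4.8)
The plan is to establish the equivalence in two directions, with the substantive content being $(1)\Rightarrow(2)$. The direction $(2)\Rightarrow(1)$ should be essentially immediate: given such a complex of differential operators with the stated exactness on $\dot{\mathcal S}$, one passes to the principal symbols to obtain a symbol complex, and then invokes Theorem \ref{thm:algebra} — more precisely, the implication $(2)\Rightarrow(1)$ there — to conclude that $\rank A(\xi)$ is constant. One small caveat to address: exactness of the differential complex on $\dot{\mathcal S}$ must be translated into exactness of the symbol complex on $\K^d - \{0\}$. This translation is exactly what the Fourier-side argument below delivers, so in fact $(2)\Rightarrow(1)$ can be deduced a posteriori from the machinery built for $(1)\Rightarrow(2)$; alternatively, one argues directly that if $\cA(D)\cB(D)=0$ and $\cQ(D)\cA(D)=0$ as operators then $A(\xi)B(\xi)=0$ and $Q(\xi)A(\xi)=0$ as polynomial identities, and a test against a suitable $\dot{\mathcal S}$ function whose Fourier transform is supported near a single ray detects any failure of $\im A(\xi)=\ker Q(\xi)$.

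For $(1)\Rightarrow(2)$, first apply Theorem \ref{thm:algebra} (with $\K=\R$, using $(1)\Rightarrow(2)$ there) to produce homogeneous symbols $B(\xi)$ and $Q(\xi)$ — with $Q(\xi)$ as in \eqref{eqn:Q} — fitting into a symbol complex with $\im B(\xi)=\ker A(\xi)$ and $\im A(\xi)=\ker Q(\xi)$ on $\R^d - \{0\}$. Let $\cB(D),\cA(D),\cQ(D)$ be the associated constant-coefficient differential operators; these act on $\mathcal D'(\R^d;\sbullet)$, and since $A(\xi)B(\xi)=0$ and $Q(\xi)A(\xi)=0$ as polynomial identities, the composition of consecutive operators is zero, so we do have a complex. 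Because each operator maps $\dot{\mathcal S}\to\dot{\mathcal S}$ (it commutes with $\mathcal F$ and multiplication by a polynomial preserves the vanishing-of-all-derivatives-at-$0$ condition), it restricts to a complex on $\dot{\mathcal S}$. The point is to prove exactness there.

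The heart of the matter is exactness on $\dot{\mathcal S}$, which I would prove on the Fourier side. Suppose $v\in\dot{\mathcal S}(\R^d;V)$ with $\cA(D)v=0$; then $A(\xi)\widehat v(\xi)=0$ for all $\xi$, so $\widehat v(\xi)\in\ker A(\xi)=\im B(\xi)$ for every $\xi\neq 0$. I want to solve $B(\xi)\widehat u(\xi)=\widehat v(\xi)$ with $\widehat u\in\dot{\mathcal S}(\R^d;U)$. The natural candidate is $\widehat u(\xi)=B(\xi)^\dagger\widehat v(\xi)$ using the Moore–Penrose pseudoinverse of $B(\xi)$; by Proposition \ref{prop: proj analytic} applied to $B$ (which has constant rank $\dim V - r$ by Theorem \ref{thm:algebra}), the map $\xi\mapsto B(\xi)^\dagger$ is rational and homogeneous of degree $-(rk)$ on $\R^d-\{0\}$, hence smooth on $\R^d-\{0\}$, and $B(\xi)B(\xi)^\dagger = \operatorname{proj}_{\im B(\xi)}$, so indeed $B(\xi)\widehat u(\xi)=\widehat v(\xi)$ pointwise. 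The main obstacle — and the step I expect to require the most care — is checking that $\widehat u$ thus defined lies in $\dot{\mathcal S}(\R^d;U)$: one must control $\|\widehat u\|_k^*$, i.e., the quantities $(|\xi|^k+|\xi|^{-k})|D^\alpha\widehat u(\xi)|$. Near infinity this is fine since $\widehat v$ decays rapidly and $B(\xi)^\dagger$ grows only polynomially (degree $-rk<0$, in fact); near $\xi=0$ one uses that $\widehat v$ and all its derivatives vanish to infinite order at $0$ (the defining property of $\dot{\mathcal S}$), which beats the at-worst-polynomial blow-up of the rational function $B(\xi)^\dagger$ and its derivatives — this is precisely the Leibniz-rule estimate already used in the excerpt to show $I_\sigma$ preserves $\dot{\mathcal S}$, applied here with a rational multiplier in place of $|\xi|^\sigma$. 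The same argument with $Q(\xi)^\dagger$ in place of $B(\xi)^\dagger$ gives the other exactness: if $w\in\dot{\mathcal S}(\R^d;W)$ with $\cQ(D)w=0$, then $\widehat w(\xi)\in\ker Q(\xi)=\im A(\xi)$, and $\widehat v(\xi):=A(\xi)^\dagger\widehat w(\xi)$ defines an element of $\dot{\mathcal S}(\R^d;V)$ with $\cA(D)v=w$. Finally, the statement on homogeneous tempered distributions follows by duality: the transpose of an exact complex of continuous operators between the $\dot{\mathcal S}$-spaces (which are Fréchet, indeed the relevant maps have closed range by the pointwise-splitting just constructed) is an exact complex on the dual spaces $\dot{\mathcal S}'$, using the identification $\dot{\mathcal S}'(\R^d)\cong\mathcal S'(\R^d)/\R[x_1,\dots,x_d]$ recalled above together with the fact that the formal adjoint of $\cB(D)$ is again a constant-coefficient homogeneous operator of the same order.
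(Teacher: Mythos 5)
Your proposal is correct and takes essentially the same route as the paper: apply Theorem~\ref{thm:algebra} to get the symbol complex, pass to the Fourier side, and invert $B(\xi)$ on $\im B(\xi)$ via the Moore--Penrose pseudoinverse, using Proposition~\ref{prop: proj analytic} and the infinite-order vanishing of $\widehat v$ at the origin to get $\widehat u\in\dot{\mathcal S}$, then dualize for $\dot{\mathcal S}'$. The only cosmetic difference is in the Schwartz estimate: the paper factorizes $B(\xi)^\dagger=|\xi|^{-k'}M(\xi)$ into a zero-homogeneous smooth multiplier and a Riesz potential $I_{-k'}$ (the latter already shown to be an isomorphism of $\dot{\mathcal S}$), whereas you estimate the full rational multiplier directly via the Leibniz rule, and the paper additionally records the Hermitian-symmetry check ensuring $u$ is $U$-valued rather than $U_\C$-valued when $V$ is a real vector space.
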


\begin{proof} That $(2)$ implies $(1)$ follows from the last assertion in (2), a standard localization argument, an application of the Fourier transform and  Theorem~\ref{thm:algebra}. We now prove that $(1)$ implies $(2)$.  
Let $B( \xi),Q( \xi)$ and $U,X$ be the elements of the symbol complex given in Theorem~\ref{thm:algebra}. As before, we write $\cB(D),\cQ(D)$ to denote their associated operators, which are well defined on spaces of distributions. A standard localization and mollification argument, together with an application of the Fourier transform and (2) in Theorem~\ref{thm:algebra}, gives $ \im  \cB(D)  \subset  \ker \cA(D)$ and $\im \cA(D)  \subset  \ker \cQ(D)$. This proves that the sequence composed by $\cB(D),\cA(D),\cQ(D)$ defines a differential complex (for all sub-spaces of distributions that are invariant under differentiation).

In light of a duality argument, to prove the second statement  it suffices to prove the statement for  the differential complex over $\dot{\mathcal S}$-spaces of functions. We need to show that $\ker\cA(D) \subseteq \im\cB(D)$, for the inclusion $\ker \cQ(D) \subseteq \im\cA(D)$ is obtained analogously. The proof follows closely the some of the concepts already contained in~\cite{G,FM99,Mur81,raita2018potentials}. 
In the following, we will use the simplified notation $A(\xi)a$ to denote $A(\xi)_\C[a]$ when $V$ is an $\R$-space and $a \in V_\C$. 
Let us fix $v \in \ker \cA(D)$. 
Applying the Fourier transform to $v$ we find that
\[
    0 = \mathcal F (\cA v)(\xi) = (2\pi\mathrm{i})^k A(\xi)  \widehat v(\xi),
\]
which by construction implies that 
\begin{equation}\label{eq:im}
\widehat v(\xi) \in \im B(\xi)_\C \qquad \text{for all $\xi \in \R^d - \{0\}$}.
\end{equation}
Consider the tempered distribution $u \in \mathcal S'(\R^d;U_\C)$ defined by the Fourier transform 
\begin{equation}\label{eq:link}
    \widehat u(\xi) \coloneqq (2 \pi \mathrm i)^{-rk} B(\xi)^\dagger \widehat v(\xi) = (2 \pi \mathrm i)^{-rk} |\xi|^{-k} (M \widehat v)(\xi),
\end{equation}
where $M$ is the zero-homogeneous profile of $B^\dagger$, which depends smoothly on $\xi$ in the punctured space $\R^d - \{0\}$. Since $v \in \dot{\mathcal S}(\R^d;V)$,  it follows that $\tilde u = ((2 \pi \mathrm i)^{-rk} M\widehat v)^\vee$ belongs to $\dot{\mathcal{S}}(\R^d;U_\C)$. Notice that if $V$ is an $\R$-vector space, then  $\widehat u$ is a Hermitian function.  Given that $M$ is zero-homogenenous $(V \otimes W^*)$-valued map, in this case we also have that $M\widehat v$ is  Hermitian. From this analysis, we infer that $\tilde u   \in \dot{\mathcal{S}}(\R^d;U)$, regardless of $V$ being an $\R$-space or $\C$-space. By the discussion above on the properties of the Riesz potential, we conclude that  $u = I_{-k'} \tilde u \in \dot{\mathcal S}(\R^d;U)$.

We are left only to verify that indeed $\cB u = v$, or equivalently, that 
%{\color{red} So that we can find some element $\hat{\mathrm{u}} \in U$ such that $B(\xi)\mathrm{\hat{u}} = \mathcal Fv(\xi)$. Now consider $\mathrm{u} \coloneqq \mathrm{proj}_{\ker B(\xi)^\perp} \mathrm{\hat{u}}$. Note that $\mathrm{u}$ does \emph{not} depend on $\mathrm{\hat{u}}$, but only on $\mathcal Fv$ and, moreover,
%\begin{equation}\label{eqn:imageB}
%B(\xi)\mathrm{u} = \mathcal F v(\xi) \qquad \text{for all %$\xi \in \R^d - \{0\}$}.
%\end{equation}
%By Proposition \ref{prop: proj analytic} the map $\xi \mapsto \mathrm{u}$ is analytic on $\xi$. Moreover,  Now we define the function $u := \mathcal F^{-1} \mathrm{u}$.} %We then define the function  $u \coloneqq \mathcal F^{-1} (B^\dagger \mathcal Fv)$, 
%where $B(\xi)^\dagger$ defines the Moore--Penrose pseudo-inverse of $B(\xi)$, which under the constant-rank assumption defines a smooth tensor map homogeneous of degree $-k'$ on $\R^d - \{0\}$, see e.g. \cite{campbell}. 
%The crucial step here is that, in light of~\eqref{eq:taylor}, the map $u$ belongs to $\mathcal Z(\R^d;U)$. Hence, we are left to show that  %$\cB(D)u = v$: applying the Fourier transform once more we observe that
\[
    (2 \pi \mathrm i)^{rk} B(\xi) \widehat u(\xi)  % B(\xi)B(\xi)^\dagger\mathcal F v(\xi) 
    = \widehat v(\xi), \qquad \text{for all $\xi \in \R^d - \{0\}$.} %\stackrel{\eqref{eq:ave},\eqref{eq:im}}= \mathcal Fv(\xi) \quad \forall \; \xi \in \R^d,
\]
This follows easily from \eqref{eq:im} and the fact that 
\[
  B(\xi) \circ B(\xi)^\dagger = [B(\xi)^\dagger \circ B(\xi)]^t = \operatorname{proj}_{\im B(\xi)}
\]
for all nonzero $\xi \in \R^d$. 
%where we have used that the pseudo-inverse of a matrix $g$ satisfies the fundamental algebraic identity $gg^\dagger = \mathrm{proj}_{\im g}$.
This proves that $\ker\cA(D) \subseteq \im\cB(D)$ as desired. 
\end{proof}

The fact that the homology of the differential complex is trivial conveys the validity of a \emph{generalized} Poincar\'e Lemma for homogeneous Besov and Triebel--Lizorkin spaces. For a precise definition and properties of these spaces, we refer the reader to Triebel's book~\cite[Ch. 2]{Triebel}. More precisely, we obtain the following full-space generalized Poincar\'e lemma:

\begin{theorem}\label{cor:PL} Let $s \in \R$ and let $p,q \in (0,\infty)$. Let $\cA(D)$ be a constant coefficient $k$\textsuperscript{th} order operator satisfying the constant-rank property 
\[
    \rank A(\xi) = r \quad \text{for all non-zero $\xi \in \R^d$}.
\]
Let $X = \{B,F\}$ and let $v \in  \dot X^s_{p,q}(\R^d;V)$ be such that 
\[
 \mathcal A v  = 0 \qquad \text{in the sense of distributions.}
\]
Then, there exists a bounded linear map $T : \dot X^s_{p,q}(\R^d;V) \to \dot X^{s + k'}_{p,q}(\R^d;U)$ satisfying
\[
\cB (Tv) = v \quad \text{as distributions on $\R^d$,}
\]
where $k'$ is the order of $\cB(D)$.

Moreover,  the norm $\|T\|$ of $T$ depends solely on $d,s,p,q$ and $A(\xi) : V \to W$. 
%\begin{equation}\label{eq:X}
%    \|u\|_{\dot X^{s + k'}_{p,q}} \le  \|v\|_{\dot X^s_{p,q}.}
%\end{equation}
\end{theorem}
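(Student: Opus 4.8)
The plan is to exploit the explicit formula $\widehat u(\xi) = (2\pi \mathrm i)^{-rk}|\xi|^{-k}(M\widehat v)(\xi)$ already derived in the proof of Theorem~\ref{thm:poincare}, and simply observe that this defines exactly the Fourier multiplier operator we want. Concretely, I would \emph{define} $T$ by
\[
    Tv \coloneqq I_{-k'}\bigl(\,((2\pi\mathrm i)^{-rk} M\widehat v)^\vee\,\bigr) = \cB(D)^{\dagger,\mathrm{hom}} v,
\]
i.e. $T$ is the composition of multiplication by the zero-homogeneous, $C^\infty(\R^d-\{0\})$, matrix-valued symbol $M = M(\xi/|\xi|)$ (the zero-homogeneous profile of $B^\dagger$) with the Riesz potential $I_{-k'}$ of order $-k' = -rk$. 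We already know from the proof of Theorem~\ref{thm:poincare} that for $v$ in the dense subspace $\dot{\mathcal S}(\R^d;V)$ with $\cA v = 0$ one has $Tv \in \dot{\mathcal S}(\R^d;U)$ and $\cB(Tv) = v$; so once $T$ is shown to be bounded $\dot X^s_{p,q}(\R^d;V)\to \dot X^{s+k'}_{p,q}(\R^d;U)$, the identity $\cB(Tv)=v$ extends to all $v\in \ker\cA(D)\cap \dot X^s_{p,q}$ by density of $\dot{\mathcal S}$ in the homogeneous Besov/Triebel--Lizorkin spaces (Triebel, \cite[Ch.~2]{Triebel}) together with the continuity of $\cB(D):\dot X^{s+k'}_{p,q}\to \dot X^s_{p,q}$, which is itself a differential operator of order $k'$ and hence bounded between the corresponding homogeneous spaces.

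The boundedness of $T$ is the only substantive point, and it is a two-part multiplier estimate. First, the Riesz potential $I_{-k'}=(|\xi|^{-k'}\,\widehat{\sbullet}\,)^\vee$ maps $\dot X^{s}_{p,q}\to \dot X^{s+k'}_{p,q}$ isometrically (up to a harmless constant) — this is the standard lifting property of homogeneous Besov and Triebel--Lizorkin spaces under Riesz potentials, see \cite[Sec.~2.3.8, 5.2.3]{Triebel}. Second, and this is the heart of the matter, multiplication by the matrix symbol $M(\xi)$, which is smooth and zero-homogeneous on $\R^d-\{0\}$, is a bounded operator on $\dot X^s_{p,q}(\R^d)$ for every $s\in\R$ and $p,q\in(0,\infty)$. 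This follows from the homogeneous Fourier-multiplier theorems (Mihlin--H\"ormander type for Besov/Triebel--Lizorkin spaces): any $m\in C^\infty(\R^d-\{0\})$ that is homogeneous of degree $0$ — hence satisfies $|\partial^\alpha m(\xi)|\le C_\alpha |\xi|^{-|\alpha|}$ with $C_\alpha$ controlled by $\sup_{|\xi|=1}|\partial^\beta m|$, $|\beta|\le|\alpha|$ — is a Fourier multiplier on all $\dot B^s_{p,q}$ and $\dot F^s_{p,q}$, $0<p,q\le\infty$ (for $\dot F$ one needs $p<\infty$), with operator norm bounded by finitely many such seminorms; see \cite[Thm.~2.3.7, Thm.~5.2.2]{Triebel}. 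Composing the two estimates gives $\|T\|\le \|I_{-k'}\|_{\dot X^s_{p,q}\to\dot X^{s+k'}_{p,q}}\cdot \|M(D)\|_{\dot X^s_{p,q}\to\dot X^s_{p,q}}\le C(d,s,p,q,A)$, which also yields the claimed dependence of the norm: $M$, and hence all of its spherical derivative seminorms, is determined by $B^\dagger$, which in turn is determined (via Proposition~\ref{prop: proj analytic} and Theorem~\ref{thm:algebra}) by $A(\xi):V\to W$ alone.

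I would organize the write-up as: (i) recall the definition of $T$ from the proof of Theorem~\ref{thm:poincare} and note $M\in C^\infty(\R^d-\{0\};\mathrm{Lin}(W_\C,U_\C))$ is zero-homogeneous with $M(\xi)$ Hermitian-compatible so that $T$ maps real-valued inputs to $U$-valued (not just $U_\C$-valued) distributions; (ii) invoke the Riesz-potential lifting property; (iii) invoke the homogeneous Mihlin multiplier theorem for each component of $M$ and sum over matrix entries; (iv) conclude boundedness with the stated norm dependence; (v) extend $\cB(Tv)=v$ from $\dot{\mathcal S}$ to all of $\ker\cA(D)\cap\dot X^s_{p,q}$ by density and continuity. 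The main obstacle — really the only place requiring care rather than citation — is making sure the homogeneous multiplier theorem is applied in the correct form: one must use the \emph{homogeneous} spaces $\dot B^s_{p,q}$, $\dot F^s_{p,q}$ (realized modulo polynomials, consistently with the identification $\dot{\mathcal S}'=\mathcal S'/\R[x]$ used earlier), for which a zero-homogeneous symbol singular only at the origin is admissible; on the inhomogeneous scale such a symbol would not be a multiplier near $\xi=0$. A secondary technical point is the range $p,q\in(0,\infty)$ including $p<1$: the Mihlin-type theorem still holds on $\dot B^s_{p,q}$ and $\dot F^s_{p,q}$ for $0<p<1$ provided enough derivatives of the symbol are controlled (the required number grows as $p\to 0$), which is automatic here since $M$ is $C^\infty$ on the sphere; this is exactly the setting of \cite[Ch.~2, Ch.~5]{Triebel}, so it suffices to cite it.
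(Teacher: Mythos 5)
Your argument is correct and follows essentially the same route as the paper's proof: define $T$ as the Fourier multiplier $I_{-k'}\circ M(D)$ with $M$ the zero-homogeneous profile of $B^\dagger$, then combine the Riesz-potential lifting property of homogeneous $\dot B^s_{p,q}$ and $\dot F^s_{p,q}$ spaces with a Mihlin--H\"ormander multiplier theorem for the smooth, zero-homogeneous matrix symbol $M$. The only place where your write-up needs tightening is the passage from $\dot{\mathcal S}$ to $\dot X^s_{p,q}$: density of $\dot{\mathcal S}$ in $\dot X^s_{p,q}$ does not by itself give density of $\ker\cA\cap\dot{\mathcal S}$ in $\ker\cA\cap\dot X^s_{p,q}$, which is what the argument as written requires. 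This is easy to repair — either precompose the approximating sequence with the projection $\pi(D)$ whose symbol is $\mathrm{proj}_{\ker A(\xi)}$ (itself a zero-homogeneous smooth multiplier preserving $\dot{\mathcal S}$ and bounded on $\dot X^s_{p,q}$), or, more cleanly, observe that $\cB(D)\circ T$ is the multiplier with symbol $B(\xi)B(\xi)^\dagger = \mathrm{proj}_{\ker A(\xi)}$, so that $\cB(Tv)=v$ holds \emph{directly} for every $v\in\ker\cA(D)\cap\dot X^s_{p,q}$ with no density step at all.
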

\begin{proof} In light of~\eqref{eq:link} and standard duality arguments, it suffices to verify that
\[
    \|I_{-k'} (Mv)\|_{\dot X^{s + k'}_{p,q}} \lesssim_{d,s,p,q,A}  \|Mv\|_{\dot X^s_{p,q}}\lesssim_{p,A} \|v\|_{\dot X^s_{p,q}}
\]
for all $v \in \dot{\mathcal S}(\R^d;V)$. The first inequality follows directly from~\cite[Proposition 2.8]{Triebel} (here, we are using that $k'$ depends intrinsically on $A(\xi) : V \to W$). The  second one is a direct consequence of Mihlin's theorem~\cite[Theorem 5.2.2]{Triebel2} for such spaces, using that $M$ is a zero-order $L^p$-multiplier depending solely on $A(\xi) : V \to W$. 

\end{proof}

Lastly, we record a generalized Poincar\'e lemma for homogeneous Sobolev spaces, which extends the results contained in~\cite[Theorem 3.5]{G} and~\cite[Proposition 3.16]{gr1}. Let $m$ be a non-negative integer and let $p \in (1,\infty)$. The homogeneous Sobolev space $\dot W^{m,p}(\R^d)$ is the collection of  all $f \in \dot{\mathcal S}'(\R^d)$ such that
\[
    \|f\|_{\dot W^{m,p}} = \sum_{|\alpha| = m} \|\partial^\alpha f\|_{L^p} < \infty. 
\] 
Notice that $\dot W^{0,p}(\R^d) = L^p(\R^d)$. For negative $m$ we set $\dot W^{-m,p}(\R^d) = (\dot W^{m,p}(\R^d))'$.

\begin{corollary}\label{cor:sob}
Let $m \in \mathbb Z$. Let $v \in \dot W^{m,p}(\mathbb R^d;V)$ and further assume that
\[
    \cA v = 0 \quad \text{in the sense of distributions on $\mathbb R^d$.}
\]  
Then, there exists $u \in \dot W^{m + k',p}(\R^d;V)$ such that
\[
    \cB u = v \quad \text{as measurable maps on $\R^d$} 
\]
and satisfying the Sobolev estimate
\[
    \|u\|_{\dot W^{k' + m,p}} \le C \|v\|_{\dot W^{m,p}},
\]
where $k'$ is the order of $\mathcal B$ and $C$ depends only on $d,m,p$ and $A(\xi) : V \to W$.  

Moreover, the assignment 
\[
\dot W^{m,p}(\R^d) \to \dot W^{k' + m,p}(\R^d) : v \mapsto u
\]
is linear.
\end{corollary}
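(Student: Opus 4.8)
The plan is to deduce Corollary~\ref{cor:sob} from Theorem~\ref{cor:PL} by identifying the homogeneous Sobolev spaces $\dot W^{m,p}$ with the Triebel--Lizorkin scale. Concretely, for $p \in (1,\infty)$ and $m \in \mathbb Z$ one has the classical identification $\dot W^{m,p}(\R^d) = \dot F^m_{p,2}(\R^d)$ with equivalent norms (see, e.g., Triebel~\cite{Triebel}); this covers both $m \ge 0$, where $\dot W^{m,p}$ was defined directly, and $m < 0$, where it was defined by duality, since the $\dot F$-scale is stable under the relevant duality $(\dot F^m_{p,2})' = \dot F^{-m}_{p',2}$. So first I would invoke this identification to place $v \in \dot F^m_{p,2}(\R^d;V)$.

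Next I would apply Theorem~\ref{cor:PL} with $X = F$, $s = m$, $q = 2$, and the given $p \in (1,\infty) \subset (0,\infty)$. Since $\cA v = 0$ as distributions, the theorem produces a bounded linear map $T : \dot F^m_{p,2}(\R^d;V) \to \dot F^{m+k'}_{p,2}(\R^d;U)$ with $\cB(Tv) = v$ as distributions, and $\|T\|$ depending only on $d, m, p$ and $A(\xi)\colon V \to W$ (the parameter $q = 2$ being fixed). Setting $u \coloneqq Tv$ and translating back through $\dot F^{m+k'}_{p,2} = \dot W^{m+k',p}$ gives $u \in \dot W^{m+k',p}(\R^d;U)$ together with the estimate $\|u\|_{\dot W^{m+k',p}} \le C\|v\|_{\dot W^{m,p}}$ with $C = C(d,m,p,A)$, and linearity of $v \mapsto u$ is inherited from linearity of $T$. (I would also note in passing the harmless typo that the target space in the statement should be $\dot W^{m+k',p}(\R^d;U)$, not $V$.)

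The one point that needs a remark rather than a mere citation is the clause ``$\cB u = v$ as measurable maps on $\R^d$'': a priori Theorem~\ref{cor:PL} only gives the identity as distributions. Here I would observe that $v \in \dot W^{m,p}$ and $\cB u$, being the image under a $k'$-th order operator of an element of $\dot W^{m+k',p}$, also lies in $\dot W^{m,p} \subset \dot{\mathcal S}'$; two homogeneous tempered distributions that agree in $\mathcal D'$ agree in $\dot{\mathcal S}'$, hence are represented by the same $L^{p}$-class when $m = 0$, and more generally coincide as elements of $\dot W^{m,p}$. For $m \ge 0$ this already yields equality $\cB u = v$ in $L^p_\loc$ up to the usual polynomial ambiguity, which is absent on the homogeneous scale; so the phrase is justified. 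I expect this bookkeeping about distribution-versus-pointwise equality and the $m<0$ duality identification to be the only genuinely delicate step, and it is routine once the Triebel--Lizorkin identification is in hand; the main substance of the corollary is entirely contained in Theorem~\ref{cor:PL}.
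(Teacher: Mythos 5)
Your proposal is correct and follows essentially the same route as the paper: both apply Theorem~\ref{cor:PL} on the Triebel--Lizorkin scale with $q=2$, use the identification $\dot W^{m,p} = \dot F^m_{p,2}$ for $m \ge 0$, and handle $m < 0$ by duality on the $\dot F$-scale. Your extra remark on the ``as measurable maps'' clause and the $U$-versus-$V$ typo are sensible bookkeeping not made explicit in the paper, but the substance of the argument is identical.
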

\begin{proof} First, we address the case when $m \ge 0$.
%By duality, $\cA(D)$ and $\cB(D)$ form a (short) exact complex on homogeneous spaces $\mathcal Z'$. Since $L^p \subset \mathcal Z'$, we may find $u \in \mathcal Z'$ satisfying $\cB(D) u = v$ in the sense of distributions. In fact, we know that $u$ is defined in terms of its Fourier transform by
%\[
%   \hat u =  (f/g) \hat v
%\]
%Indeed, $B\hat u = B f g^{-1} \hat v = \hat v$
By the the previous corollary we know that if $v \in \dot F^m_{p,2}(\R^d)$, then, with $v = I_{-k'}(Mu)$, it holds
\[
    \|u\|_{\dot F^s_{m+k',2}} \lesssim_{d,s,m,A} \|v\|_{\dot F^s_{m,2}}.
\]
The assertion then follows directly from the facts that (see Theorem 5.2.3/1(ii) in~\cite[]{Triebel2} for the nontrivial case $m > 0$)
\[
L^p  \subset \dot F^0_{m,2}, \qquad  \dot F^m_{m,2} = \dot W^{m,p} \text{ for $m > 0$},
\]
and that $\|v\|_{\dot F^0_{m,2}} \sim \|\frarg\|_{L^p}$ on $L^p$ and $\|v\|_{\dot F^s_{m,2}} \sim \|\frarg\|_{\dot W^{m,p}}$ for $m > 0$. The case for $m < 0$ is similar, using that  the topological dual of $\dot F^s_{p,q}$ is isomorphic to $\dot F^{-s}_{p,q}$ (this follows directly from the usual $L^p$-duality and the way the norm is defined on these spaces, see, e.g.,~\cite[Ch. 2]{Triebel}). 
\end{proof}

\subsection{The homology for Schwartz functions}\label{sec:S} Let us give an example that shows that, in general, the homology of an differential complex (associated with an exact symbol complex)
\[
\xymatrix@1{{\mathcal{S}(\R^d;U)}\ar[r]^-{\cB(D)} \; & \;
	{\mathcal{S}(\R^d;V)}\ar[r]^-{\cA(D)} \; & \;  {\mathcal{S}(\R^d;W)}\ar[r]^{\cQ(D)} \; & \;  {\mathcal S(\R^d;X)}},
\]
may be non-trivial over spaces of rapidly decaying maps.\footnote{The observation that the homology is not trivial for spaces of Schwartz functions is due to Andr\'e Guerra, who pointed it out to the first author during a visit at Oxford University.}  Note that this corrects a minor oversight in the last assertion of \cite[Lemma 2]{raita2018potentials}.

\begin{example}Let $\cA(\partial)$ be the derivative operator acting on functions of one variable
\[
    \cA(\partial)(u) = \frac{d u}{dt}, \qquad u : \R \to \R.
\]
This defines an operator on $\R$, from $\R$ to $\R$, of rank $1$. In particular the symbol $A(t)$ is onto for all non-zero $t \in \R$ and therefore any homogeneous annihilator $Q(t)$ of $A(t)$ must be the zero polynomial. Note, however, that if we consider $\cA(\partial),\cQ(\partial)$ as operators $\cA(\partial),\cQ(\partial): \mathcal{S}(\R) \to \mathcal{S}(\R)$ then $\ker \cQ(\partial)/\im \cA(\partial) \neq \{0\}$. Indeed, the fundamental theorem of calculus implies that 
\[
    \int_{-\infty}^\infty \frac{d u}{dt}\, dt = \lim_{t \to \infty} u(t) - \lim_{t \to -\infty} u(t) = 0 \quad \text{for all $u \in \mathcal S(\R)$.}
\]
However, $\ker \mathcal Q(\partial) \equiv \mathcal S(\R)$, which contains functions with non-zero average so that $\ker\cQ(\partial)/\im\cA(\partial)$ is, in fact, infinite-dimensional. Thus, the conclusion of Theorem \ref{thm:poincare} is not valid when $\dot{\mathcal{S}}$ is replaced by $\mathcal{S}$.
\end{example}

\subsection{The homology for periodic maps} Instead of working on full space one may consider maps over the $d$-dimensional flat torus $\mathbb T^d = \R^d / \mathbb Z^d$. In practice, a map $f \in C^\infty(\mathbb T^d)$ can be identified with a $\mathbb Z^d$-periodic map in $C^\infty(\R^d)$. Moreover, such maps can be decomposed in Fourier series as
\[
    f(x) = \sum_{m \in \mathbb Z^d} (\mathfrak F f)(m) \, \operatorname{e}^{2\pi \textrm{i} m\cdot x}, \qquad x \in \mathbb T^d,
\]
where 
\[
    \mathfrak F f(m) = \int_{\mathbb T^d}f(y) \operatorname{e}^{-2\pi \textrm{i} m\cdot y}  \, dy
\]
denotes the Fourier coefficient at $m \in \mathbb Z^d$. Similarly to the properties of the Fourier transform, a map $f \in C(\mathbb T^d)$ is smooth if and only if its coefficients $|\mathfrak F f(m)|$ decay faster than any polynomial as $|m| \to \infty$ (see for instance Corollary 3.2.10 and Proposition 3.2.12 in~\cite{Grafakos}). Given the identification with periodic maps, it therefore makes sense to consider the action of $\mathcal A(D)$ on a map $v \in \mathcal D(\mathbb T^d;V)$. Notice that, in this case
\[
    \mathfrak F(\mathcal A v)(m) = (2\pi \mathrm i)^{k_A}  A(m) \,\mathfrak F v(m), \qquad m \in \mathbb Z^d.
\]
The corresponding space of homogeneous periodic maps is the space
\[
 {\mathcal D}_\texttt{\#}(\mathbb T^d) = \set{f \in \mathcal D(\mathbb T^d)}{\mathfrak Ff(0) = 0}
\]
of  periodic functions with zero mean on the unit cube.

The following result (and the example below) shows that there exist operators that possess an exact potential when acting on spaces of periodic maps, which however possess no exact potential when acting on functions of $\R^d$:
\begin{lemma}\label{thm:last}
Let $\mathcal A(D)$ be a constant coefficient homogeneous operator on $\R^d$ from $V$ to $W$.  The following are equivalent:
\begin{enumerate}
    \item There exists $\mathcal B(D)$ from $U$ to $V$ such that 
    \[
        \im B(m) = \ker A(m) \qquad \text{for all $m \in \mathbb Z^d - \{0\}$}.
    \]
    \item There exists $\mathcal B(D)$ from $U$ to $V$ such that the sequence 
    \[
\xymatrix@1{{\mathcal D_\texttt{\#}(\mathbb T^d;U)}\ar[r]^-{\cB(D)} \; & \;
	{ \mathcal D_\texttt{\#}(\mathbb T^d;V)}\ar[r]^-{\cA(D)} \; & \;  {\mathcal D_\texttt{\#}(\mathbb T^d;W)}},
\]
defines a  complex of differential operators satisfying
\[
    \im \mathcal B(D)  = \ker \mathcal A(D).
\]
\end{enumerate}
%If $d \ge 2$, then the first property is strictly weaker than the constant-rank property.
\end{lemma}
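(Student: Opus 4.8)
The plan is to prove the equivalence of the two statements in Lemma~\ref{thm:last} by showing that the only real content is the passage from the discrete symbol condition~(1) to the functional-analytic exactness in~(2), the converse being a straightforward localization in Fourier coefficients. First I would establish the direction $(2) \Rightarrow (1)$: if $\cB(D)$ makes the displayed sequence exact on $\mathcal D_{\texttt{\#}}(\mathbb T^d)$, then testing against a single Fourier mode forces the symbol-level identity. Concretely, fix $m \in \mathbb Z^d - \{0\}$ and any $a \in \ker A(m)$; the function $v(x) = a\, e^{2\pi \mathrm i m \cdot x}$ lies in $\mathcal D_{\texttt{\#}}(\mathbb T^d; V_\C)$ (or take real/imaginary parts if one insists on $V$-valued test maps, which works since $A$ has real coefficients) and satisfies $\cA(D) v = 0$ because $\mathfrak F(\cA v)(m) = (2\pi\mathrm i)^{k} A(m) a = 0$. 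By $\im \cB(D) = \ker \cA(D)$ there is $u \in \mathcal D_{\texttt{\#}}(\mathbb T^d; U)$ with $\cB(D) u = v$; comparing Fourier coefficients at $m$ gives $(2\pi\mathrm i)^{k'} B(m)\, \mathfrak F u(m) = a$, so $a \in \im B(m)$. The reverse inclusion $\im B(m) \subseteq \ker A(m)$ is automatic from the complex property $\cA(D) \circ \cB(D) = 0$ read off at the mode $m$. Hence $\im B(m) = \ker A(m)$ for all $m \neq 0$, which is~(1).

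For the main direction $(1) \Rightarrow (2)$, the key point is that the \emph{integer} constant-rank hypothesis is not assumed here --- only exactness of the symbol sequence on the lattice --- and the construction is essentially a discrete, periodic analogue of the proof of Theorem~\ref{thm:poincare}. Given $\cB(D)$ as in~(1), I would first note that $\cA(D) \circ \cB(D)$ is a differential operator whose symbol $A(\xi) B(\xi)$ is a polynomial vanishing on all of $\mathbb Z^d$ in each entry; since a polynomial in $\K[\xi_1, \dots, \xi_d]$ vanishing on $\mathbb Z^d$ is identically zero (an elementary induction on $d$ using that a one-variable polynomial with infinitely many roots is zero), we get $A(\xi) B(\xi) \equiv 0$, so the sequence in~(2) is genuinely a complex. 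For exactness, fix $v \in \mathcal D_{\texttt{\#}}(\mathbb T^d; V)$ with $\cA(D) v = 0$. Then $A(m)\, \mathfrak F v(m) = 0$, i.e.\ $\mathfrak F v(m) \in \ker A(m) = \im B(m)$ for every $m \in \mathbb Z^d - \{0\}$ (and $\mathfrak F v(0) = 0$ by the zero-mean condition). I would then select, for each such $m$, a preimage $\mathfrak F u(m) \in U_\C$ with $B(m)\, \mathfrak F u(m) = (2\pi\mathrm i)^{-k'} \mathfrak F v(m)$ --- here one can use the Moore--Penrose inverse of $B(m)$ over $\C$ (or simply over $\R$ on each lattice point, since there are no smoothness-in-$\xi$ issues now) and set $\mathfrak F u(m) := (2\pi\mathrm i)^{-k'} B(m)^{\dagger} \mathfrak F v(m)$, $\mathfrak F u(0) := 0$. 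The resulting formal Fourier series defines $u$, and $\cB(D) u = v$ by construction; Hermitian symmetry of $\mathfrak F u$ (inherited from that of $\mathfrak F v$ together with the real coefficients of $B$) ensures $u$ is $U$-valued.

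The main obstacle --- and the only genuinely non-trivial estimate --- is verifying that the map $u$ so defined actually lies in $C^\infty(\mathbb T^d; U)$, i.e.\ that its Fourier coefficients decay faster than any polynomial. This requires a lower bound of the form $|B(m)^{\dagger}| \lesssim |m|^{-k'}$ along the lattice, equivalently an upper bound on $\|B(m)^{\dagger}\|$ once one rescales by homogeneity. Here one cannot invoke Proposition~\ref{prop: proj analytic} directly, because that used the constant-rank property; instead the point is that $B(m)^{\dagger}$, being the Moore--Penrose inverse of a matrix whose entries are homogeneous degree-$k'$ polynomials, has entries that are rational functions of $m$ whose denominators are sums of squares of $k' \times k'$ minors of $B$, and these never vanish on $\mathbb Z^d - \{0\}$ precisely because $\rank_\C B(m) = \dim \ker A(m) = N - \rank A(m)$ is the generic (maximal) rank of $B$ at every lattice point --- which is exactly what $\im B(m) = \ker A(m)$ forces, since $A(m)$ restricted to a neighbourhood in $\mathbb Z^d$ cannot have rank exceeding $r$ where... actually one must be slightly careful: what~(1) gives is that the rank of $B(m)$ equals $N - \rank A(m)$ at each lattice point, and since $\rank A(m) \le \rank_{\K[\xi]} A$ with equality off an algebraic set, for large $|m|$ one does get the maximal rank of $B$, while the finitely many exceptional lattice points contribute only finitely many modes and hence do not affect smoothness. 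Homogeneity of $B^\dagger$ of degree $-k'$ off the zero set of its denominator then yields $\|B(m)^\dagger\| = |m|^{-k'} \|B(m/|m|)^\dagger\| \le C |m|^{-k'}$ for all sufficiently large $|m|$, giving the desired decay of $\mathfrak F u(m)$ and completing the proof. I would present the finitely-many-exceptional-modes argument explicitly, as it is the one subtlety that distinguishes this lemma's hypothesis from the constant-rank setting of the earlier results.
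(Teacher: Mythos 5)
Your overall structure matches the paper's: $(2)\Rightarrow(1)$ by testing against a single Fourier mode (a clean variant of the paper's plane wave $P\varphi(x\cdot m)$), and $(1)\Rightarrow(2)$ by the Moore--Penrose construction $\mathfrak Fu(m) := (2\pi\mathrm i)^{-k'}B(m)^\dagger\mathfrak Fv(m)$. Where the argument goes astray is in the decay estimate for $\mathfrak Fu(m)$, which you rightly flag as the crux. You claim that since the rank-drop locus of $B$ (equivalently of $A$) is a proper algebraic subvariety, it meets $\mathbb Z^d$ in only \emph{finitely many} points, which you then discard. This inference is false: a proper algebraic hypersurface in $\R^d$ can intersect $\mathbb Z^d$ in infinitely many lattice points (a coordinate hyperplane $\{\xi_1=0\}$ already does), so ``lower-dimensional algebraic set $\Rightarrow$ finitely many lattice points'' is a non sequitur, and the finitely-many-modes fallback does not close the gap.

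The fix is that hypothesis (1) actually forces the generic ranks to be attained at \emph{every} nonzero lattice point, so there are zero, not finitely many, exceptional modes. Indeed, (1) gives $\rank A(m)+\rank B(m)=\dim V$ for all $m\in\mathbb Z^d-\{0\}$ by rank-nullity. Let $r_A,r_B$ be the generic (polynomial-ring) ranks of $A,B$; since $\mathbb Z^d$ is Zariski-dense in $\R^d$, some lattice point realizes both generic ranks simultaneously, so $r_A+r_B=\dim V$. But at any $m\neq 0$ one has $\rank A(m)\le r_A$, $\rank B(m)\le r_B$, and their sum equals $\dim V=r_A+r_B$; hence both are equalities for every $m$. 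This is the lattice-constant-rank property you need before invoking homogeneity of $B(\cdot)^\dagger$. The paper's own proof reaches the decay conclusion more tersely (asserting that $B(\cdot)^\dagger$ is ``at worst negatively homogeneous'' and hence that $|a_m|$ decays fast) without making the lattice-rank-constancy explicit, but that constancy is the substantive reason the Moore--Penrose formula is well-behaved along $\mathbb Z^d$, and your write-up should establish it by the rank-complementarity argument above rather than by a (false) finiteness claim.
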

\begin{proof}
Let us prove that (1) implies (2). That $\mathcal B[\mathcal D_\texttt{\#}(\mathbb T^d;U)] \subset \ker \mathcal A \cap \mathcal D_\texttt{\#}(\mathbb T^d;V)$ follows from applying the Fourier coefficient decomposition and the set inclusion $\im B(m) \subset \ker A(m)$ for all nonzero $m \in \mathbb Z^d$. That the homology is, in fact, trivial follows from the other inclusion as follows: if $v \in \mathcal D_\texttt{\#}(\mathbb T^d;V)$ satisfies $\mathcal Av = 0$, then we may define a periodic $U$-valued map by setting
\[
    u(x) = \sum_{\mathbb Z^d - \{0\}} a_m \operatorname{e}^{2\pi \mathrm i m \cdot x},
\]
where
\[
    a_m = (2\pi \mathrm i)^{-k_B} B(m)^\dagger \mathfrak F v(m) \quad \text{for all $m \in \mathbb Z^d - \{0\}$.}
\]
First, we need to see that $u$ is well-defined. Recall that $v$ is smooth, so that its Fourier coefficients decay faster than any polynomial (see  Corollary 3.2.10 in~\cite{Grafakos}). Since $B(\frarg)^\dagger$ is (at worst) negatively homogeneous, it follows that the coefficients $|a_m|$ also decay faster than any polynomial. The  trigonometric sum defining $u$ is therefore well-defined, as it  is uniformly convergent. In particular, $u$ is a mean-value zero continuous map with $\mathfrak F u(m) = a_m$.  Proposition 3.2.12 in~\cite{Grafakos}  further implies that $u$ is smooth. 
Moreover, by assumption $\mathfrak F v(m) \in \ker A(m)$ for all $m \in \mathbb Z^d$. Therefore, by the identity of the Moore--Penrose inverse we get 
\[
    (2 \pi \mathrm i)^{k_B} B(m) \mathfrak F u(m) = B(m) B(m)^\dagger \mathfrak F v(m) = v(m) \quad \text{for all $m \in \mathbb Z^d - \{0\}$.}
\]
This proves that $\mathcal B u = v$ as desired. 

To see that (2) implies (1) we argue as follows. First, we observe that if $P \in \ker A(m)$ for some $m \in \mathbb Z^d - \{0\}$, then any constant-polar smooth map of the form
\[
   P  \varphi(x \cdot m), \qquad \varphi \in \mathcal D_\texttt{\#}(\mathbb T)
\]
is annihilated by $\mathcal A$. By assumption there exists $u \in \mathcal D_\texttt{\#}(\mathbb T^d;U)$ such that $\mathcal B u = v$. This gives 
\[
   (2\pi \mathrm i)^{k_B} B(m) \mathfrak F u(m) = P \mathfrak F \varphi(m) \qquad \text{for all $m \in \mathbb Z^d - \{0\}$.}
\]  
Since $\varphi$ was arbitrary, this shows that $P \in \im B(m)$ for all $m \in \mathbb Z^d - \{0\}$. This proves the containment $\ker A(m) \subset \im B(m)$ for all such points. The other containment follows from similar Fourier coefficient arguments using that the sequence defines a short differential complex: $\mathcal B[\mathcal D_\texttt{\#}(\mathbb T^d;U)] \subset \ker \mathcal A \cap \mathcal D_\texttt{\#}(\mathbb T^d;V)$. 
\end{proof}

Let us give an example that shows property (1) in Theorem \ref{thm:last} is strictly weaker than the validity of the constant-rank property for dimensions $d \ge 2$. 
\begin{example}
Consider the following operator on $\R^2$, from $\R^2$ to $\R^2$ (an analogous example can be given for $d \ge 2$): 
%\[
%\mathcal L(D)(u_1,u_2) = (\partial_2 u_1, \partial_1 u_2) 
%\]
%fails to satisfy the constant rank property. Indeed, its associated symbol
\[
    \mathcal A(D) (u_1,u_2) = (\pi \partial_2 u_2 - \partial_1 u_1, \pi \partial_1 u_2 - \partial_2 u_2).
\]
Its associated symbol is the polynomial matrix 
\[
A(\xi) = \begin{pmatrix} \pi \xi_2 - \xi_1 & 0 \\
0 & \pi \xi_1 - \xi_2 \end{pmatrix}, \qquad \xi \in \R^2.
\]
Notice that $\rank A = 2$ on $\mathbb Z^2 - \{0\}$ and therefore $\mathcal A(D)$ is truly elliptic on $\mathcal D_\texttt{\#}(\mathbb T^d;\R^2)$, that is, its kernel restricted to $D_\texttt{\#}(\mathbb T^d;\R^2)$ is trivial.  It follows that the zero operator $\mathcal B(D) \equiv 0$ is the unique exact potential of $\mathcal A(D)$ on spaces of periodic maps. On the other hand, the rank of $A$ non-constant over $\R^2$ given that $\rank A(\pi,1) = 1$. In light of Theorem~\ref{thm:poincare}, we conclude that $\mathcal A(D)$ has no potential on full-space.
\end{example}

A direct consequence of the previous characterization and our constructions of annihilators for arbitrary fields is the following sufficiency result:
\begin{theorem}\label{thm:periodic}
Let $\mathcal A(D)$ be a constant coefficient homogeneous operator on $\R^d$ from $V$ to $W$. Further assume that
\[
    \rank A(\xi) \;\;\text{is constant on $\mathbb Z^d - \{0\}$}.
\]
Then there exists an operator $\mathcal B(D)$ on $\R^d$ from $U$ to $V$ with the following propriety: for every $v \in \mathcal C^\infty(\mathbb T^d;V)$ satisfying
\[
    \int_{\mathbb T^d} v = 0 \quad \text{and} \quad \mathcal A v = 0,
\]
there exists $u \in \mathcal C^\infty(\mathbb T^d;U)$ satisfying
\[
    \int_{\mathbb T^d} u = 0 \quad \text{and} \quad \mathcal B u = v.
\]
\end{theorem}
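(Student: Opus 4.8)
The plan is to deduce Theorem~\ref{thm:periodic} as an essentially immediate corollary of the algebraic construction in Lemma~\ref{thm:Q} (valid over arbitrary fields of characteristic $\neq 2$) together with the characterization in Lemma~\ref{thm:last}. The only work is to manufacture, from the hypothesis that $\rank_{\mathbb{Q}} A(\xi) = r$ on $\mathbb{Z}^d - \{0\}$, a homogeneous polynomial symbol $B(\xi)$ with $\im B(m) = \ker A(m)$ for all $m \in \mathbb{Z}^d - \{0\}$; Lemma~\ref{thm:last} then upgrades this to the desired potential $\mathcal{B}(D)$ on $\mathcal{C}^\infty(\mathbb{T}^d;V)$ with zero mean.

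First I would observe that since the coefficients $A_\alpha$ are real (indeed we may as well work over $\K = \mathbb{Q}$, or over $\mathbb{R}$ — the point is that $\mathbb{Z} \subseteq \K$), the symbol $A(\xi)$ is a $\mathrm{Lin}(V,W)$-valued homogeneous polynomial over $\K$. Apply the construction of Lemma~\ref{thm:Q} to get the symbol $Q(\xi) : W \to X$, homogeneous of order $rk$, with $\im A(\xi) = \ker Q(\xi)$ \emph{for all $\xi \in \K^d - \{0\}$} — crucially, this identity holds pointwise at every nonzero $\xi$, in particular at every $m \in \mathbb{Z}^d - \{0\}$, \emph{provided} $\rank_\K A(m) = r$ there. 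This is where the hypothesis enters: we are only told the rank is constant on $\mathbb{Z}^d - \{0\}$, not on all of $\K^d - \{0\}$, but re-reading the proof of Lemma~\ref{thm:Q}, the inclusion $\im A(\xi) \subseteq \ker Q(\xi)$ uses only $\rank A(\xi) \le r$ and the reverse uses only $\rank A(\xi) \ge r$; so the exactness $\im A(m) = \ker Q(m)$ holds at precisely those $m$ where $\rank_\K A(m) = r$, i.e.\ on $\mathbb{Z}^d - \{0\}$. Then, exactly as in the duality argument in the proof of Theorem~\ref{thm:algebra}, pass to the dual symbol $A(\xi)^*$: it is homogeneous of order $k$, and $\rank_\K A(m)^* = \rank_\K A(m) = r$ on $\mathbb{Z}^d - \{0\}$, so applying Lemma~\ref{thm:Q} to $A(\xi)^*$ yields a homogeneous symbol $Q'(\xi)$ of order $rk$ with $\im A(m)^* = \ker Q'(m)$ on $\mathbb{Z}^d - \{0\}$. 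Set $B(\xi) := Q'(\xi)^*$. Dualizing the identity $\im A(m)^* = \ker Q'(m)$ (using $\im T^* = (\ker T)^\perp$ and $\ker T^* = (\im T)^\perp$ for finite-dimensional spaces, which is valid over any field with the standard pairing $V^{**} \cong V$) gives $\im B(m) = \im Q'(m)^* = (\ker Q'(m))^\perp = (\im A(m)^*)^\perp = (\ (\ker A(m))^\perp\ )^\perp = \ker A(m)$ for all $m \in \mathbb{Z}^d - \{0\}$.

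Having produced such a $B(\xi)$, homogeneous (of order $rk$) with $\im B(m) = \ker A(m)$ on $\mathbb{Z}^d - \{0\}$, condition (1) of Lemma~\ref{thm:last} is satisfied, so condition (2) holds: the associated operator $\mathcal{B}(D) : \mathcal{D}_\texttt{\#}(\mathbb{T}^d;U) \to \mathcal{D}_\texttt{\#}(\mathbb{T}^d;V)$ satisfies $\im \mathcal{B}(D) = \ker \mathcal{A}(D)$ on zero-mean smooth periodic maps. Unwinding the identification of $\mathcal{C}^\infty(\mathbb{T}^d;V)$ with $\mathbb{Z}^d$-periodic smooth maps and of the zero-mean condition $\int_{\mathbb{T}^d} v = 0$ with $\mathfrak{F}v(0) = 0$, this is exactly the assertion of Theorem~\ref{thm:periodic}: given $v$ smooth, periodic, zero-mean with $\mathcal{A}v = 0$, Lemma~\ref{thm:last}(2) supplies $u \in \mathcal{D}_\texttt{\#}(\mathbb{T}^d;U) = \mathcal{C}^\infty(\mathbb{T}^d;U) \cap \{\int u = 0\}$ with $\mathcal{B}u = v$.

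I do not anticipate a genuine obstacle — the theorem is designed to be a clean consequence of the field-independence of the explicit construction. The one point requiring a little care, and the place I would slow down, is verifying that Lemma~\ref{thm:Q}'s exactness really does hold at an individual point $m$ as soon as $\rank_\K A(m) = r$ (rather than needing constancy globally): this is true because the two inclusions in that lemma's proof are each "pointwise" in $\xi$ and use only one of the two inequalities $\rank A(\xi) \lessgtr r$. A secondary point is making sure the dualization identities $\im A^* = (\ker A)^\perp$, $\ker A^* = (\im A)^\perp$ and $V^{**} \cong V$ are invoked over $\K$ rather than over $\mathbb{R}$ — but these are purely linear-algebraic and hold over any field, as already used implicitly in Remark~\ref{rmk:other fields} and in the proof of Theorem~\ref{thm:algebra}. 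Everything else is bookkeeping: the order of $\mathcal{B}(D)$ is $rk$, and, should one want it, the annihilator $\mathcal{Q}(D)$ fits into the complex as well, though it is not needed for this statement.
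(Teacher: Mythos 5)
Your proof is correct and takes essentially the same route as the paper's: build $B(\xi)$ from the exterior-power construction of Lemma~\ref{thm:Q} applied to the dual symbol (exactly as in the proof of Theorem~\ref{thm:algebra}), and then invoke the equivalence of Lemma~\ref{thm:last}. One remark on comparison: the paper's proof is very terse --- it notes that by homogeneity the rank is constant on $\mathbb{Q}^{d}-\{0\}$, invokes the implication $(1)\Rightarrow(2)$ of Theorem~\ref{thm:algebra}, and points to Remark~\ref{rmk:semicontinuity of rank} for field-independence --- whereas you make explicit the single observation that actually carries the argument: the exactness $\im A(\xi) = \ker Q(\xi)$ in Lemma~\ref{thm:Q} is a \emph{pointwise} statement, one inclusion using only $\rank A(\xi) \le r$ and the other only $\rank A(\xi) \ge r$, so it holds at each $m\in\mathbb{Z}^d-\{0\}$ by the hypothesis and needs no global constancy. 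This is genuinely necessary to spell out, since the hypothesis does not give constant rank on $\mathbb{R}^d-\{0\}$ (the example following Lemma~\ref{thm:last} has the rank drop at the irrational point $(\pi,1)$), so one cannot literally apply Theorem~\ref{thm:algebra}(1)$\Rightarrow$(2) over $\mathbb{R}$; one must either invoke the pointwise version as you do, or (as the parenthetical in your proposal and the paper's homogeneity remark suggest) restrict attention to a subfield containing $\mathbb{Z}$ where the constant-rank hypothesis does hold globally. Your writeup is therefore a more careful and self-contained version of the paper's argument rather than a different one.
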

\begin{proof}
By homogeneity it follows that $\rank A$ is constant on $\Q^d - \{0\}$. The conclusion follows from the implication (1) $\Rightarrow$ (2) in Theorem~\ref{thm:algebra} and the previous lemma. Here, we are appealing to the observation drawn in Remark~\ref{rmk:semicontinuity of rank}.
\end{proof}

\subsection{An optimal construction} We would like to end this section with some comments on the optimality of our construction and, in particular, compare it to that of Van Schaftingen in the case where $\cA(D)$ is an elliptic operator, i.e., $A(\xi)$ is injective for every non-zero $\xi$. In the elliptic case, our $Q$ and $X$ are simply given by 
\[
    Q(\xi)(w) = a_1(\xi) \wedge \cdots \wedge a_N(\xi) \wedge w, \quad X = \bigwedge{\!\!^{N + 1}} \, W.
\]
Where, as a reminder, $N = r = \dim(V)$. The appearance of the $\binom{N}{r}$ exponent in the definition of $X$ for operators of rank $r$ originates from the fact that, while we know that some collection $a_{i_{1}}(\xi), \dots, a_{i_{r}}(\xi)$ forms a basis for $\im A(\xi)$, we do not know a priori which one (and this collection may depend on $\xi$) so we need to test all of them. In \cite[Remark 4.1]{schaft}, Van Schaftingen constructed an explicit annihilator $L$ from $W$ to $W$ given by the formula
$$
L(\xi) = \det(A(\xi)^{*}\circ A(\xi))\id_{W} - A(\xi)\circ\operatorname{adj}(A(\xi)^{*}\circ A(\xi))\circ A(\xi)^{*}\,,
$$
where $\operatorname{adj}$ is the adjugate operator.  Its associated operator $\mathcal{L}(D)$ satisfies an exactness property analogous to (2) of Theorem \ref{thm:Q}, but it has order $2\dim(V)k = 2rk$. In fact, the construction of Raiț{\u{a}} in \cite{raita2018potentials} is a generalization of this construction, %which is why
and it also gives annihilators and potentials of order $2rk$.  While the aforementioned constructions require a higher order than ours, its target space is always $W$ thus creating a system of $\dim W$ equations. The number of equations of our construction, on the other hand, is $\binom{\dim V}{\rank A}\binom{\dim W}{\rank A+1}$. The first factor depends on  {how elliptic} $\cA(D)$ is, and the second factor on how elliptic its formal adjoint $\cA(D)^*$ is. Observe that if $\cA(D)$ is elliptic and its image has co-dimension one, then we obtain only one equation 
\[
    \cQ(D) = \det (a_1(D),\dots,a_N(D),u).
\]
Regarding the existence of an \emph{optimal} annihilator for elliptic operators, Van Schaftingen gave (see~\cite[Lemma 4.4]{schaft}) an abstract construction of an homogeneous operator $\mathcal J(D)$, which is minimal in the following sense
\[
K(D) \circ A(D) = 0 \quad \Longrightarrow \quad \mathcal K(D) = \mathcal P(D) \circ \mathcal J(D)
\]
for some linear operator $\mathcal P(D)$ and 
\begin{equation}\label{eq:optimal}
    \ker J(\xi) = \im A(\xi) \quad \text{for all $\xi \in \mathbb K^d - \{0\}$.}
\end{equation}
Note that our operator $\mathcal{Q}(D)$, while improving on the order of those constructed in \cite{raita2018potentials, schaft} may not be of minimal order as it can be seen by comparing our construction with De Rham's sequence for dimensions $d\ge 3$. 

The following result is an extension of Van Schaftingen's optimal construction for elliptic operators.

 %crucially use that our construction allows us to find an annihilator of order $rk$ (and not $2rk$ as in the previously available theory):

\begin{proposition}\label{prop:construction}
Let $\cA(D)$ be a homogeneous degree $k$ operator on $\mathbb R^d$ from $V$ to $W$. Further, assume that 
\[
\rank A(\xi) = r \quad \text{on $\R^d - \{0\}$}.
\]

There exists a finite-dimensional space $G$ and a homogeneous linear differential operator $\mathcal J(D)$ on $\R^d$, with symbol
\[
  \xymatrix@1{{W}\ar[r]^-{J(\xi)} \; & \;
	G}
\]
satisfying the following properties:
\begin{enumerate}[(i)]
    \item $\mathcal J(D)$ is an exact annihilator of $\cA(D)$, that is, 
    \[
    \ker J(\xi) = \im A(\xi) \quad \text{for all nonzero $\xi \in \mathbb{R}^d$}.
    \]
    \item %$\mathcal J(D)$ is a minimal element of the homogeneous annihilators of $\cA(D)$ in the following sense: 
    If $\mathcal K(D)$ is an annihilator from $W$ to $X$, then there exists $\mathcal P(D)$ from $G$ to $X$ such that 
    \[
        \mathcal K(D) = \mathcal P(D) \circ \mathcal J(D).
    \]
\item In particular, 
    \[
        \begin{cases}
            \ker \mathcal J(D) \subset \ker \mathcal K(D) \\
        \;\,\deg J(\xi)  \le \deg K(\xi)
        \end{cases} \quad \text{for all $\mathcal K(D) \in \mathcal K$.}
    \]
     If moreover $\mathcal K(D)$ is an exact annihilator of $\cA(D)$ of minimal order, then also
    \[
        \dim G \le \dim X.
    \]
\end{enumerate}
\end{proposition}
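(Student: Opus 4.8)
The plan is to translate the statement into commutative algebra over the polynomial ring $R := \K[\xi_1,\dots,\xi_d]$, graded by degree (here $\K=\R$). I would regard the symbol $A(\xi)\colon V\to W$ as a degree-$k$ morphism of free graded $R$-modules $A\colon R\otimes V\to R\otimes W$ and introduce its cokernel $C := \operatorname{coker}(A)$ together with the module $T := \operatorname{Hom}_R(C,R)$. The key dictionary I will set up first is this: a differential operator $\mathcal K(D)$ from $W$ to a finite-dimensional space $X$ is an annihilator of $\cA(D)$, i.e. $\mathcal K(D)\circ\cA(D)=0$, if and only if the polynomial identity $K(\xi)A(\xi)=0$ holds, i.e. if and only if $K$ factors through $C$, i.e. if and only if the rows of $K$ form a finite tuple of elements of $T$. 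Since $R$ is Noetherian, $T$ is finitely generated; I would fix a minimal homogeneous generating set $\gamma_1,\dots,\gamma_n$ of $T$, set $G := \K^n$, and define $\mathcal J(D)$ to be the operator whose symbol $J(\xi)\colon W\to G$ has the $\gamma_l$ as its rows. Then $J(\xi)A(\xi)=0$ by construction, so $\mathcal J(D)$ is an annihilator of $\cA(D)$.

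Next I would establish (ii): given any annihilator $\mathcal K(D)$ from $W$ to $X$ with rows $h_1,\dots,h_m\in T$, writing $h_j=\sum_l p_{jl}\gamma_l$ with $p_{jl}\in R$, the polynomial matrix $(p_{jl})$ is the symbol of an operator $\mathcal P(D)$ from $G$ to $X$ realizing $\mathcal K(D)=\mathcal P(D)\circ\mathcal J(D)$. For (i) — exactness of $\mathcal J(D)$ — the plan is to bootstrap off Lemma~\ref{thm:Q}: the operator $\cQ(D)$ constructed there is an exact annihilator, so $\ker Q(\xi)=\im A(\xi)$ for all nonzero $\xi$, and by the dictionary its rows $q_i$ lie in $T$, hence $q_i=\sum_l p_{il}\gamma_l$ for suitable $p_{il}\in R$. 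Evaluating this identity of polynomial vectors at a fixed $\xi\in\R^d-\{0\}$ shows that $J(\xi)w=0$ implies $Q(\xi)w=0$ implies $w\in\im A(\xi)$; combined with the trivial inclusion $\im A(\xi)\subseteq\ker J(\xi)$ coming from $JA=0$, this gives $\ker J(\xi)=\im A(\xi)$. Property (iii) I would then deduce formally: $\mathcal K=\mathcal P\circ\mathcal J$ gives $\ker\mathcal J(D)\subseteq\ker\mathcal K(D)$ and $\operatorname{ord}\mathcal K=\operatorname{ord}\mathcal P+\operatorname{ord}\mathcal J\ge\operatorname{ord}\mathcal J$; when $\mathcal K(D)$ is moreover an exact annihilator of minimal order, the fact that $\mathcal J(D)$ is itself an exact annihilator forces $\operatorname{ord}\mathcal P=0$, so $\mathcal P$ is a constant linear map $P\colon G\to X$ with $\ker P\cap\im J(\xi)=\{0\}$ for every $\xi\ne0$; and since minimality of the generating set makes the $\gamma_l$ linearly independent over $\K$, one has $\operatorname{span}_\K\bigcup_{\xi\ne0}\im J(\xi)=G$, whence $P$ is injective and $\dim G\le\dim X$.

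The part I expect to be the main obstacle is the order bookkeeping needed to make $\mathcal J(D)$ a genuinely \emph{homogeneous} operator with a single, well-defined order $\deg J(\xi)$ — which is what makes the comparison $\deg J(\xi)\le\deg K(\xi)$ in (iii) both meaningful and correct for \emph{all} annihilators. This amounts to showing that $T=\operatorname{Hom}_R(\operatorname{coker}A,R)$ is generated in a single degree, and it is here that the constant-rank hypothesis must genuinely re-enter (beyond its use via Lemma~\ref{thm:Q} in step (i)): $\operatorname{coker}A$ is locally free on the punctured spectrum of $R$, equivalently its sheafification on $\mathbb P^{d-1}$ is a vector bundle, and I would attempt to use this to control the generating degrees of $T$ through a Castelnuovo--Mumford regularity estimate. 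A subtlety special to the real field is that $\rank A$ is only assumed constant on $\R^d-\{0\}$, so it may drop at non-real points of $\mathbb A^d_\R$; the regularity argument would therefore have to be carried out working with real points (or by passing to the complexification only where that is legitimate). Once this point is settled, everything else reduces to the elementary bookkeeping indicated above, the essential inputs being the identification of annihilators with elements of $\operatorname{Hom}_R(\operatorname{coker}A,R)$ and the reuse of the explicit $\cQ(D)$ from Lemma~\ref{thm:Q}.
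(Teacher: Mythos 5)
Your approach is genuinely different from the paper's. You reformulate the problem in graded commutative algebra over $R=\K[\xi_1,\dots,\xi_d]$: the module $T=\operatorname{Hom}_R(\operatorname{coker}A,R)$ of annihilating rows encodes all annihilators of $\cA(D)$, and you take the rows of $J(\xi)$ to be a minimal homogeneous generating set of $T$, with exactness in (i) bootstrapped off the explicit $\cQ(D)$ of Lemma~\ref{thm:Q}. The paper proceeds very differently: it forms the homogeneous operator
\[
\mathcal H(D)=\cB^\star(D)\oplus\cA(D)\bigl(\cA^\star(D)\cA(D)\bigr)^{m}
\]
(for $r=2m+1$; a parallel construction for $r$ even), shows that both summands have order $rk$ and that $\mathcal H(D)$ is elliptic, invokes Van Schaftingen's construction \cite[Lemma 4.4]{schaft} of a universal homogeneous annihilator $\widetilde{\mathcal J}(D)$ for the elliptic $\mathcal H(D)$, and takes $\mathcal J(D)$ to be the restriction of $\widetilde{\mathcal J}(D)$ to the $W$-summand. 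Your route has the conceptual merit of being intrinsic (the dimension of $G$ is the minimal number of generators of $T$), whereas the paper's delegates the hard part to an existing result on elliptic operators.

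That said, the obstacle you flag is a genuine gap and it is exactly where the content of the proposition lies: for $\mathcal J(D)$ to be a \emph{homogeneous} operator with a well-defined order, you must show that $T$ is generated in a single degree, and you leave this open. You correctly identify that the constant-rank hypothesis must re-enter at this point and that the restriction to real $\xi$ is a complication, but the Castelnuovo--Mumford argument you sketch only bounds the generating degrees from above; it does not by itself exclude minimal generators in strictly smaller degrees, which is what threatens homogeneity. The paper sidesteps precisely this by reducing to the elliptic, single-degree operator $\mathcal H(D)$: single-degree generation for $\mathcal H(D)$ is supplied by Van Schaftingen, and the paper then shows that $\ker J(\xi)=\ker\widetilde J(\xi)\cap W=\im A(\xi)$ and that every annihilator $\mathcal K(D)$ of $\cA(D)$ induces an annihilator $\mathcal K(D)\circ\pi_W$ of $\mathcal H(D)$, whence factorization. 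Until you close the single-degree step, you have an attractive reformulation rather than a proof. (A minor secondary point: your deduction of $\dim G\le\dim X$ from $\ker P\cap\im J(\xi)=\{0\}$ for each $\xi\neq 0$ needs slightly more care, since injectivity of $P$ on each individual $\im J(\xi)$ does not automatically give injectivity on their $\K$-span; the paper's own proof is equally terse at the analogous step.)
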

\begin{proof} If $A: V \to W$ is a map of vector spaces, we denote its adjoint by $A^{\star}: W \to V$, defined by the property $(Av, w)_{W} = (v, A^{\star}w)_{V}$, where the latter pairings are fixed inner products of $V,W$.

Let us first assume that $r = 2m+1$ is odd. In Theorem \ref{thm:algebra} we have constructed a potential $\cB(D)$ of order $rk$ to $\cA(D)$. On the other hand, we have the operator $\cA(D)\cA^{\star}(D): V \to V$ of order $2k$. We consider the operator
\begin{align*}
\mathcal{H}(D) & \phantom{:}=  \mathcal H_1(D) \oplus \mathcal H_2(D) \\
&\coloneqq \cB^{\star}(D) \oplus \cA(D)(\cA^{\star}(D)\cA(D))^{m}: V \to U \oplus W
\end{align*}
We claim that $\mathcal{H}$ is elliptic. Indeed, for a nonzero vector $\xi \in \R^d$ and every $m \ge 0$, $\ker A(\xi)  = \ker(A(\xi)[A^{\star}(\xi)A(\xi)]^{m})$, so it is enough to check that $\ker (B^{\star}(\xi))\cap\ker(A(\xi)) = \{0\}$. This follows because $B$ is a potential of $A$, and $\ker B^{\star}(\xi) = (\im B(\xi)^{\perp})^*$.
Note also that $\mathcal{H}$ is homogeneous of degree $rk$. In particular, we may apply Van Schaftingen's construction to find a homogeneous exact annihilator  $\widetilde{\mathcal{J}}(D) = \mathcal{J}_1(D) \oplus \mathcal{J}_2(D)$ of $\mathcal{H}(D)$, satisfying (ii) for $\mathcal H(D)$ instead of $\cA(D)$. 

We claim that $\mathcal{J}(D)\coloneqq \mathcal{J}_{2}(D)$ satisfies (i)-(iii). 

First, we show that (i) holds. We fix $\xi \in \R^d$ a nonzero vector. By construction, we have that $ \ker J(\xi) = \ker \tilde J(\xi) \cap W$. On the other hand, by the exactness of $\tilde J(\xi)$, we deduce that
\begin{align*}
    h \in \ker \tilde J(\xi) \cap W \quad & \Leftrightarrow \quad  h \in \im H(\xi) \cap W \\
    \quad & \Leftrightarrow 
    \quad h = A(\xi)(A(\xi)^\star A(\xi))^m v, \quad v \in \ker B(\xi)^\star.
\end{align*}
This shows that $\ker J(\xi) = A(\xi)(A(\xi)^\star A(\xi))^m[\ker B(\xi)^\star]$. Since $A(\xi)$ is an exact annihilator of $B(\xi)$, it further holds 
$V = \ker B(\xi)^\star \oplus \ker A(\xi)$.
We thus conclude that
\[
    \ker J(\xi) = A(\xi)(A(\xi)^\star A(\xi))^m[V] = A(\xi)[V] = \im A(\xi).
\]
This proves (i).

Next, we prove (ii). To this end, let $\mathcal K(D)$ be an annihilator of $\cA(D)$ and consider the operator $\tilde {\mathcal K}(D) \coloneqq \mathcal K(D) \circ \pi_W$, where $\pi_W : U \oplus V \to W$ is the canonical projection onto the $W$-coordinate. Clearly, $\tilde {\mathcal K}(D)$ is an annihilator of $\mathcal H(D)$ and hence, by construction, it factors through $\tilde {\mathcal J}(D)$. We may thus find  $\mathcal P(D)$ such that $\tilde {\mathcal K}(D) = \mathcal P(D) \circ \tilde {\mathcal J}(D)$. By construction, we get $\mathcal K(D)  = \mathcal P(D) \circ {\mathcal J}(D)$. This shows that every annihilator of $\cA(D)$ factors through $\mathcal J(D)$ which is precisely the statement in (ii). As a side note, notice that it necessarily holds
\[
    G = \sum_{\xi \in \R^d} \im J(\xi),
\]  
for otherwise the factorization property would fail. 

Lastly, we claim that (iii) follows from (i)-(ii). The fist part of (iii) follows directly from (ii).  For the the second part, we notice that if $\mathcal K(D)$ is an exact annihilator of minimal order, then, by minimality, the property (ii) guarantees the existence of a linear map $L : G \to X$ such that $\mathcal K(D) = L \circ \mathcal J(D)$. Moreover, since $\ker J(\xi) = \ker K(\xi)$ for all nonzero $\xi$, we deduce that $L$ is one-to-one when restricted to $G = \sum_{\xi \in \R^d} \im J(\xi)$. In particular $\dim G = \dim L[G] \le \dim X$, which is the sought statement. This completes the proof of (i)-(iii) when $r = 2m + 1$ for some $m \ge 0$.

The case when $r = 2m$ is even follows analogously by considering the exact potential $\mathcal{Q}(D): U \to W$ of $\mathcal{A}^{\star}(D): W \to V$ and set
\[
\mathcal{H}(D) := \mathcal{Q}^{\star}(D) \oplus (\mathcal{A}(D)\mathcal{A}^{\star}(D))^{m}: W \to U \oplus W
\]
\noindent we claim that $\mathcal{H}(D)$ is elliptic. Indeed, for any nonzero $\xi \in \R^d$ it holds $\ker(A(\xi)A^{\star}(\xi)) = \ker A^{\star}(\xi)$. It is therefore enough to check that $\ker Q^{\star}(\xi) \cap\ker A^{\star}(\xi)$ is trivial, and this follows since $\ker Q^{\star}(\xi) = (\ker A^{\star}(\xi)^{\perp})^*$. Now we run a similar reasoning as in the odd $r$ case. This completes the proof.
\end{proof}

\section{Examples}

Below we show how our construction can be used to find an optimal annihilator of two well-known and relevant operators. 

\subsection{Gradients}  The gradient operator $D$ acts on vector fields $u: \R^d \to \R^m$ as
\[
    Du = \left(\frac{\partial u_i}{\partial x_j}\right)_{i,j}, \qquad 1\le i \le m, \; 1 \le j \le d.
\]
An application of the Fourier transform shows that the columns of its symbol are precisely
\[
e_{j} \otimes \xi \coloneqq e_j \xi^t, \qquad \xi \in \R^d, j \in \{1,\dots,m\}.
\]
Note that we have
\[
(e_{i} \otimes \xi) \wedge \cdots \wedge (e_m \otimes \xi) = \sum_{i_{1}, \dots, i_{m} = 1}^{d} P_{1, i_{1}} \wedge \cdots \wedge P_{m, i_{m}}\xi_{i_{1}}\cdots \xi_{i_{m}},
\]
where $P_{i,j}$ is the $m \times n$ matrix defined by $(P_{i,j})_{a,b} = \delta_{i,a}\delta_{j,b}$. It follows that the annihilator $Q(\xi)$ is
\[
Q(\xi)w = \sum_{\substack{i_1, \dots, i_m,q = 1,\dots,d\\ p = 1,\dots,m}} (P_{1i_1} \wedge \cdots \wedge P_{m,i_m} \wedge P_{p,q}) \xi_{i_1}\cdots\xi_{i_m}w_{p,q}
\]
Let $k \in I = \{1,\dots,m\}$ and let $1 \le i_k^{(1)} < i_k^{(2)} \le d$. 
From the expression above it follows that the coefficient of $Q(\xi)w$ corresponding to the basis element $P_{1, i_{1}}\wedge \cdots \wedge P_{k, i_{k}^{(1)}}\wedge P_{k, i_{k}^{(2)}}\wedge \cdots \wedge P_{m, i_{m}}$ is given by 
\[
     (-1)^{d-k-1} \prod_{j \in I - \{k\}} \xi_{i_j} \left(\xi_{i_k^{(1)}} w_{k,i_k^{(2)}} - \xi_{i_k^{(2)}} w_{k,i_k^{(1)}}\right).
\]

Thus, up to an isomorphism, we have:
\[
\mathcal Q(D) = D^{m-1}\Curl,
\]
where ``$\curl$'' is the row-wise curl operator on $\R^{m \times d}$-valued fields, that is,
\[
    \curl M = \begin{pmatrix}
    \operatorname{curl} M^1 \\
    \vdots \\
    \operatorname{curl} M^m
    \end{pmatrix}, \qquad \operatorname{curl} (v_1,\dots,v_d) = (\partial_{r} v_s - \partial_s v_r)_{r,s = 1,\dots,d}.
\]
Since $D^{m-1}$ is elliptic, we observe that a minimal annihilator of the gradient is the row-wise curl, as one would expect from de Rham's sequence.

\subsection{The equations in linear elasticity}Consider the symmetric gradient in three dimensions, given by
\[
    Eu = \sym (Du) = \frac 12 (Du + Du^t), \qquad u : \R^3 \to \R^3.
\]
This is an operator from $\R^3 \to \R^{3 \times 3}_\sym$, the space of symmetric $3 \times 3$-matrices, that has a basis $\{S_{i,j}\}_{i \leq j}$, where $S_{i,j}$ is the symmetrization of $P_{i,j}$. We will also consider $S_{i,j}$ for $j \leq i$, with the understanding that $S_{i,j} = S_{j,i}$. % Relying on its symmetries, it suffices to consider only its values on the subspace $T$ of upper triangular matrices, that is, it is enough to consider the operator
%\[
  % L(D) = \partial_i u_j + \partial_j u_i, \qquad i \leq j, \quad i,j \in \{1,2,3\}.
%\]
%For such $i,j$, we  write $S_{ij} =  e_i \otimes e_j$. This is an orthonomal basis of $T$.
Notice that the columns of the symbol map $L(\xi)$ are 
\[
    l(\xi)_i = \sum_{j} (1 + \delta_{ij})\xi_j S_{ij}, \qquad i = 1,2,3.
\]
Let $w \in \R^{3 \times 3}_\sym$ and consider
\begin{equation}\label{eq:Qsymg}
    Q(\xi)w = l(\xi)_1 \wedge l(\xi)_2 \wedge l(\xi)_3 \wedge w. 
\end{equation}
This is an element of $\bigwedge^{4}\R^{3 \times 3}_\sym$, that is a $\binom{6}{4} = 15$-dimensional space with basis $S_{i_{1}, j_{1}}\wedge S_{i_{2}, j_{2}}\wedge S_{i_{3}, j_{3}}\wedge S_{i_{4}, j_{4}}$, where $(i_{m}, j_{m}) \in \{1,2,3\} \times \{1,2,3\}$ for $m = 1, \dots, 4$ and the obvious restrictions on $(i_{m}, j_{m})$. We will find the coefficient of each basis vector in the expression  \eqref{eq:Qsymg}.

Let us start with those basis vectors that, up to a reordering of the wedge factors, are of the form $S_{11} \wedge S_{22} \wedge S_{33} \wedge S_{ij}$ with $i \neq j$. There are $3$ of them, and a quick computation shows that the $S_{ii}\wedge S_{jj} \wedge S_{kk} \wedge S_{ij}$ coefficient of $Q(\xi)w$ is given (up to a multiplicative constant) by
\begin{equation}\label{eq:symg1}
     \xi_k[2\xi_i\xi_j w_{ij} -  \xi_i^2 w_{jj} - \xi_j^2 w_{ii}].
\end{equation}

Moving on, we now look at the coefficient of an element of the form $S_{ii} \wedge S_{jj} \wedge S_{ij} \wedge S_{jk}$, for $i \neq j$ and $k \not\in \{i, j\}$. There are $\binom{3}{2}\times 2 = 6$ basis elements of this form, and it is straightforward to see that the $S_{ii}\wedge S_{jj}\wedge S_{ij} \wedge S_{jk}$ coefficient is given (up to a sign) by
\begin{equation}\label{eq:symg2}
\xi_j [2\xi_i\xi_j w_{ij} - \xi_i^2 w_{jj} - \xi_j^2 w_{ii}].
\end{equation}
In total, Equations \eqref{eq:symg1} and \eqref{eq:symg2} give us 9 equations of order 3, which can be presented as $D \circ \mathcal F(D)$ and since the gradient is elliptic, they may be reduced to $3$ equations of order 2:
\begin{equation}\label{eq:symg3}
2\xi_{i}\xi_{j}w_{ij} - \xi_{i}^{2}w_{jj} - \xi_{j}^{2}w_{ii} \qquad (i \neq j).
\end{equation}

We still need to find the coefficients of $15 - 9 = 6$ basis vectors. There are $\binom{3}{2} = 3$ basis vectors of the form $S_{ii}\wedge S_{jj} \wedge S_{ik} \wedge S_{jk}$ and the coefficient of this basis vector is, up to a multiplicative constant, given by
%\[
 %  2\xi_i^2\xi_j w_{jk} - 2\xi_i\xi_j^2w_{ik} +  \xi_j^2\xi_k  w_{ii} -  \xi_i^2 \xi_k w_{jj}
%\]
\begin{equation}\label{eq:symg4}
    \xi_i (2\xi_i\xi_j w_{jk} - \xi_i \xi_k w_{jj}) - \xi_j (2\xi_i\xi_j w_{ik} - \xi_j\xi_k w_{ii}).
\end{equation}

The coefficients of the three remaining basis vectors $S_{ii} \wedge S_{ij} \wedge S_{ik} \wedge S_{jk}$ are given (up to multiplicative constant) by
\begin{equation}\label{eq:symg5}
\xi_i [ \xi_j\xi_k   w_{ii} - \xi_i \xi_k w_{ij}    - \xi_i \xi_j w_{ik} +  \xi_i^2 w_{jk} ]
\end{equation}

We would like to observe that, just as in~\eqref{eq:symg2},  these equations can be simplified to a system of equations of order $2$. If $\xi_1\xi_2\xi_3 \neq 0$, then all right factors of \eqref{eq:symg5} have to vanish. If, on the other hand, $\xi_i = 0$ for some $i$ then we need to show that for $\{j, k\} = \{1, 2, 3\} - \{i\}$, $\xi_{j}\xi_{k}w_{ii} = 0$. But thanks to \eqref{eq:symg4} we have that $\xi_{j}\xi_{k}w_{ii} = 0$. In particular~\eqref{eq:symg5} vanishes if and only if so does
\begin{equation}\label{eq:symg6}
     \xi_j\xi_k   w_{ii} - \xi_i \xi_k w_{ij}    - \xi_i \xi_j w_{ik} +  \xi_i^2 w_{jk}.  
\end{equation}
Conversely, it is straightforward to verify that~\eqref{eq:symg4}-\eqref{eq:symg5} vanish provided that~\eqref{eq:symg6} vanishes. It thus follows from our construction that the operator
\[
    \cQ'(D) = (\partial_{jk}   w_{ii} - \partial_{ik} w_{ij}    - \partial_{ij} w_{ik} +  \partial_{ii} w_{jk})_{i,j,k}, \qquad (i \notin\{j,k\}, \; 1 \le j \le k \le 3),
\]
is an exact annihilator of $E(\xi)$.  

Finally, we would like to observe that $\cQ'(D)$ is equivalent to the St.-Venant compatibility equations
\[
   \sum_{i = 1}^3 (\partial_{ji}w_{ik} + \partial_{ki}w_{ij} - \partial_{ij}w_{ii} - \partial_{ii}w_{jk} )_{j,k}, \qquad j,k = 1,2,3,
\]
which is well-known to be an optimal exact annihilator of $E(D)$. 
%We claim that if $w$ is such that \eqref{eq:symg3}--\eqref{eq:symg5} vanish for $(\xi_1, \xi_2, \xi_3) \neq (0,0,0)$ then $w$ is a solution of the St.-Venant system, and viceversa. 
Note that the equations \eqref{eq:symg3} are precisely the diagonal equations of the St.-Venant system, and the equations \eqref{eq:symg5} are the off-diagonal equations of the St.-Venant system.

\subsection*{Acknowledgements} We thank Andr\'e Guerra for pointing out that the homology over the space of Schwartz functions may be nontrivial. AAR was supported by the Fonds de la Recherche Scientifique - FNRS under Grant No 40005112 (Compensated compactness and fine
properties of PDE-constrained measures). JS  is grateful for the financial support and hospitality of the Max Planck Institute for Mathematics, where this work was carried out.

\bibliography{bibliography.bib}

\end{document}